\newcommand{\edge}{\sigma}
\newcommand{\edges}{{\mathcal E}}
\newcommand{\edgesint}{{\mathcal E}_{{\rm int}}}
\newcommand{\edgesext}{{\mathcal E}_{{\rm ext}}}
\newcommand{\mesh}{{\mathcal M}}
\def\xC{{\mathrm C}}
\def\xL{{\mathrm L}}
\def\xBV{{\mathrm BV}}
\def\xR{{\mathbb R}}
\newcommand{\bfn}{{\boldsymbol n}}
\newcommand{\bfu}{{\boldsymbol u}}
\newcommand{\bfx}{{\boldsymbol x}}
\newcommand{\gradi}{{\boldsymbol \nabla}}
\newcommand{\dive}{{\rm div}}
\newcommand{\norm}[1]{{\lVert #1 \rVert}}
\newcommand{\normtbv}[1]{{\lVert #1 \rVert_{\mathcal{T},t,\xBV}}}
\newcommand{\normxbv}[1]{{\lVert #1 \rVert_{\mathcal{T},x,\xBV}}}
\newcommand{\dx}{\,{\rm d}\bfx}
\newcommand{\dt}{\,{\rm d}t}
\newcommand{\li}{|\hspace{-0.12em}[}
\newcommand{\ri}{]\hspace{-0.12em}|}
\newtheorem{theorem}{Theorem}[section]
\newtheorem{lemma}[theorem]{Lemma}
\theoremstyle{definition}
\newtheorem{definition}[theorem]{Definition}
\theoremstyle{proposition}
\newtheorem{proposition}[theorem]{Proposition}
\theoremstyle{remark}
\newtheorem{remark}[theorem]{Remark}
\numberwithin{equation}{section}
\begin{document}
\title{Entropy estimates for a class of schemes \\ for the  Euler equations}
\author{T. Gallou\"et}
\address{Aix-Marseille Universit\'e, CNRS,   Centrale  Marseille,  I2M, UMR 7373, 13453 Marseille, France, \\ {thierry.gallouet@univ-amu.fr}}
\author{R. Herbin}
\address{Aix-Marseille Universit\'e, CNRS,   Centrale  Marseille,  I2M, UMR 7373, 13453 Marseille, France, \\ {raphaele.herbin@univ-amu.fr}}
\author{J.-C. Latch\'e}
\address{Institut de Radioprotection et de S\^{u}ret\'{e} Nucl\'{e}aire (IRSN), Saint-Paul-lez-Durance, 13115 France \\ (jean-claude.latche@irsn.fr)}
\author{N. Therme}
\address{Institut de Radioprotection et de S\^{u}ret\'{e} Nucl\'{e}aire (IRSN), Saint-Paul-lez-Durance, 13115 France \\ (nicolas.therme@univ-nantes.fr)}
\begin{abstract}
In this paper, we derive entropy estimates for a class of schemes for the Euler equations which present the following features: they are based on the internal energy equation (eventually with a positive corrective term at the righ-hand-side so as to ensure consistency) and the possible upwinding is performed with respect to the material velocity only.
The implicit-in-time first-order upwind scheme satisfies a local entropy inequality.
A generalization of the convection term is then introduced, which allows to limit the scheme diffusion while ensuring a weaker property: the entropy inequality is satisfied up to a remainder term which is shown to tend to zero with the space and time steps, if the discrete solution is controlled in $\xL^\infty$ and BV norms.
The explicit upwind variant also satisfies such a weaker property, at the price of an estimate for the velocity which could be derived from the introduction of a new stabilization term in the momentum balance.
Still for the explicit scheme, with the above-mentioned generalization of the convection operator, the same result only holds if the ratio of the time to the space step tends to zero.
\end{abstract}
\subjclass{35Q31,65N12,76M10,76M12}
\keywords{finite volumes, staggered, Euler equations, entropy, compressible flows, analysis.}
\date{April 2017}
\maketitle
%
%\tableofcontents
%
%----------------------------------------------------------------------------------------------------------------------------------
%
\section{Introduction}\label{sec:int}
The Euler system of equations is a hyperbolic system of  conservation laws (mass, momentum and energy) describing the motion of a compressible fluid. 
In the case of regular solutions, an additional conservation law can be written for an additional quantity called entropy; however, in the presence of shock waves, the (mathematical) entropy  decreases. 
It is now known that weak solutions of the Euler system satisfying an entropy inequality may be non unique \cite{chi-15-wea}; nevertheless,  entropy inequalities play an important role in providing global stability estimates. 

In order to compute approximate solutions of the Euler equations, it is natural to seek numerical schemes which satisfy some entropy inequalities; these inequalities should enable to prove that, as the mesh and time steps tend to 0, the limit of the approximate solutions, if it exists, satisfies an entropy inequality. 
A classical way of doing so is to design so-called ``entropy stable schemes" \cite{tad-16-ent}. 
Discrete entropy inequalities are known for the one dimensional case for the Godunov scheme \cite{god-59-dif} and have been derived for Roe-type schemes in the one space dimension case \cite{ism-09-aff}.
Entropy stability has also been proven in the multi-dimensional case for semi-discrete schemes on unstructured meshes \cite{mar-12-ent,ray-16-ent}.
However it is not always possible to obtain entropy stability, especially for fully discrete schemes; here we are interested in proving more general discrete entropy inequalities or estimates for some fully discrete numerical schemes for the system of the Euler equations for a perfect gas, in the multi-dimensional case; these inequalities allow to fulfill our goal, namely to show that the possible limits of the approximation satisfy an entropy inequality. 
Such a technique was used for the convergence study of a time implicit mixed finite volume--finite element scheme for the Euler-Fourier equations with a special equation of state \cite{fei-16-con} allowing to obtain a priori estimates.

%entropy based viscous regularizations

Our aim here is to derive such discrete entropy inequalities for a class of fully discrete, time explicit or implicit, multidimensional schemes introduced for the Euler and Navier-Stokes equations in  \cite{her-14-ons,gra-16-unc,her-17-con,her-17-cel,gas-17-mus,gou-17-sta} (some of them are implemented in the open-source CALIF$^3$S software \cite{califs} developed at the French Institut de Radioprotection et de S\^uret\'e Nucl\'eaire); these schemes share the characteristic of solving the internal energy balance, with positive correction terms to ensure the consistency.
Such a class of schemes is referred to in the litterature as "flux splitting scheme", since it may be obtained by splitting the system by a two-step technique (usually into a "convective" and "acoustic" part), apply a standard scheme to each part (which, for the convection system, indeed yields, at first order, an upwinding with respect to the material velocity) and then sum both steps to obtain the final flux.
Works in this direction may be found in \cite{ste-81-flu, lio-93-new, zha-93-num, lio-06-seq, tor-12-flu}, and we hope that the discussion presented in this paper may be extended in some way to these numerical methods.

Let us  recall the derivation of an entropy for the continuous Euler system.
Consider the following system:
\begin{subequations}\label{eq:cont}
\begin{align}\label{eq:mass} &
\partial_t \rho + \dive( \rho\, \bfu) = 0,
\\[1ex] \label{eq:e_int} &
\partial_t (\rho\, e) + \dive(\rho \, e \, \bfu) + p\, \dive (\bfu)\geq 0,
\\ \label{eq:etat} &
 p=(\gamma-1)\, \rho\, e,
\end{align}\end{subequations}
where $t$ stands for the time, $\rho$, $\bfu$, $p$ and $e$ are the density, velocity, pressure and internal energy respectively, and $\gamma > 1$ is a coefficient specific to the considered fluid.
 System \eqref{eq:cont} may be derived from the Euler equations, subtracting the kinetic energy balance from the total energy balance.
The first equation is the mass balance, the second one is the internal energy balance, and the third one is the equation of state for a perfect gas.
It is complemented by initial conditions for $\rho$ and $e$, denoted by $\rho_0$ and $e_0$ respectively, with $\rho_0 >0$ and $e_0>0$, and by a boundary condition which we suppose to be $\bfu \cdot \bfn=0$ at any time and {\em a.e.} on $\partial\Omega$, where $\bfn$ stands for the normal vector to the boundary.

\medskip
Starting from these equations, we seek an entropy function $\eta$ satisfying:
\begin{equation}\label{eq:entropy}
\partial_t \eta(\rho,e) + \dive\bigl[ \eta(\rho,e)\, \bfu \bigr] \leq 0.
\end{equation}
To this end, we introduce the functions $\varphi_\rho$ and $\varphi_e$ be defined as follows:
\begin{equation}\label{eq:vphis}
\varphi_\rho(z)=z \log(z),\quad \varphi_e(z)=\frac{-1}{\gamma -1} \log(z), \quad \mbox{for } z >0,\end{equation}
 and show that, formally, the function $\eta$ defined by
 \begin{equation}\label{eq:def_eta}
\eta(\rho,e)=\varphi_\rho(\rho)+\rho \varphi_e(e).
\end{equation}
satisfies \eqref{eq:entropy}.

Indeed, multiplying \eqref{eq:mass} by $\varphi'_\rho(\rho)$, a formal computation yields:
\begin{equation}\label{eq:ent_m}
\partial_t \bigl[\varphi_\rho(\rho)\bigr] + \dive\bigl[ \varphi_\rho(\rho)\, \bfu \bigr]
+ \bigl[\rho\varphi'_\rho(\rho)-\varphi_\rho(\rho) \bigr] \dive(\bfu) = 0.
\end{equation}
Then, multiplying \eqref{eq:e_int} by $\varphi'_e(e)$ yields, once again formally, since $\varphi'_e(z) <0$ for $z >0$:
\begin{equation}\label{eq:ent_e}
\partial_t \bigl[\rho\, \varphi_e(e) \bigr] + \dive \bigl[ \rho \, \varphi_e(e)\, \bfu \bigr] + \varphi'_e(e)\, p \,\dive (\bfu) \leq 0.
\end{equation}
Summing \eqref{eq:ent_m} and \eqref{eq:ent_e} and noting that $\varphi_\rho$ and $\varphi_e$  have chosen such that
\begin{equation}
	\rho\varphi'_\rho(\rho)-\varphi_\rho(\rho)+ \varphi'_e(e)\,p=0,
	\label{prop-entropie}
\end{equation}
we obtain \eqref{eq:entropy}, which is an entropy balance for the Euler equations, for the specific entropy defined by \eqref{eq:def_eta}.

In this paper,  we derive some analogous discrete entropy inequalities (with a possible remainder tending to 0) for the fully discrete, time implicit or explicit  schemes of \cite{her-14-ons,gra-16-unc,her-17-con,her-17-cel,gas-17-mus,gou-17-sta}, with a possible upwinding limited to that of the convection terms with respect to the material velocity.
Note that the entropy estimates that we obtain here apply  both the staggered schemes \cite{her-14-ons,gra-16-unc,gas-17-mus} and to the colocated scheme of \cite{her-17-cel}; indeed in both cases the mass and internal energy are written on the same cells.

Depending on the time and space discretization, we obtain three types of results:
\begin{list}{-}{\itemsep=1ex \topsep=1ex \leftmargin=1.cm \labelwidth=0.3cm \labelsep=0.5cm \itemindent=0.cm}
\item local entropy estimates, {\it i.e.} discrete analogues of \eqref{eq:entropy}, in which case the scheme is entropy stable, 
\item global entropy estimates, {\it i.e.} discrete analogues of:
\begin{equation}
\frac d {dt} \int_\Omega \eta(\rho,e) \dx \leq 0. \label{eq:global-entropy}
\end{equation}
(such a relation is a stability property of the scheme; this kind of relation was proven in e.g. \cite{coq-06-sec} for a higher order scheme for the 1D Euler equations),
\item ``weak local" entropy estimates, {\it i.e.} results of the form:
\[
\partial_t \eta(\rho,e) + \dive\bigl[ \eta(\rho,e)\, \bfu \bigr] +\mathcal{R} \leq 0,  
\]
with $\mathcal{R}$ tending to zero with respect to the space and time discretization steps (or combination of both parameters), provided that the solution is controlled in reasonable norms, here, $L^\infty$ and BV norms.
Such an inequality readily yieds a "Lax-consistency" property, of the form: the limit of a convergent sequence of solutions, bounded in suitable norms, satisfies the following weak entropy inequality:
\begin{multline*}
-\int_0^T \int_\Omega  \eta(\rho,e)\, \partial_t \varphi + \eta(\rho,e)\, \bfu \cdot \gradi \varphi \dx \dt
- \int_\Omega  \eta(\rho,e)(\bfx,0)\ \varphi(\bfx,0) \dx \leq 0,
\\
\mbox{for any function }\varphi \in \xC^\infty_c \bigl([0,T)\times \bar\Omega\bigr), \varphi \geq 0.
\end{multline*}
The precise statement of this result for the explicit, implicit and semi-implicit schemes of \cite{her-14-ons,gra-16-unc,her-17-cel,gas-17-mus,gou-17-sta} may be found in \cite{her-17-cons}.
\end{list}

\medskip
This paper is organized as follows.
We first address implicit schemes (Section \ref{sec:implicit}), then explicit schemes (Section \ref{sec:explicit}).
In these two sections, the exposition is not structured in the same way: for implicit schemes, we first consider an upwind discretization for which we get a local discrete entropy inequality (Theorem \ref{thrm:impl_upw}),  and then a variant with reduced numerical diffusion, for which we  only get a global entropy estimate and a weak local entropy inequality (Theorem \ref{thrm:impl_muscl}). 
The case of explicit schemes is a little more tricky: we again consider the same two discretizations ({\it i.e.} upwind and reduced diffusion)  but we first deal with the mass balance equation, then with the internal energy equation, and combine the results to address entropy inequalities.

\medskip
Note that the term "implicit scheme" refers to a scheme where the discrete analogue of System \eqref{eq:cont} is solved by an implicit ({\it i.e.} backward Euler) time discretization; this does not prevent from any fractional step technique that would the momentum balance equation separately.
For instance, the pressure correction algorithms introduced in \cite{her-14-ons,gra-16-unc,her-17-cel} satisfy this property.
%
%----------------------------------------------------------------------------------------------------------------
%
\section{Implicit schemes}\label{sec:implicit}

Let $\mesh$ be a decomposition of the domain $\Omega$, supposed to be regular in the usual sense of the finite element literature (see \textit{e.g.} \cite{cia-91-bas}).
By $\edges$ and $\edges(K)$ we denote the set of all $(d-1)$-faces $\edge$ of the mesh and of the cell $K \in \mesh$ respectively, and we suppose that the number of the faces of a cell is bounded.
The set of faces included in $\Omega$ (resp. in the boundary $\partial \Omega$) is denoted by $\edgesint$ (resp. $\edgesext$); a face $\edge \in \edgesint$ separating the cells $K$ and $L$ is denoted by $\edge=K|L$.
For $K \in \mesh$ and $\edge \in \edges$, we denote by $|K|$ the measure of $K$ and by $|\edge|$ the $(d-1)$-measure of the face $\edge$.
Let $(t_n)_{0\leq n \leq N}$, with $0=t_0 < t_1 <\ldots < t_N=T$, define a partition of the time interval $(0,T)$, which we suppose uniform for the sake of simplicity, and let $\delta t=t_{n+1}-t_n$ for $0 \leq n \leq N-1$ be the (constant) time step.

\medskip
The discrete pressure, density and the internal energy unknowns are associated with the cells of the mesh $\mesh$; they are denoted by:
\[
\big\{ p^n_K,\ \rho^n_K,\ e^n_K,\ K \in \mesh,\ 0 \leq n \leq N \big\}.
\]

\medskip
The general form of the discrete analogue of System \eqref{eq:cont} reads:
\begin{subequations}\label{eq:impl}
\begin{align}
\nonumber
& \mbox{For } K \in \mesh,\ 0 \leq n \leq N-1, 
\\[1ex] 
\label{eq:mass_i} &
\frac{|K|}{\delta t} (\rho_K^{n+1}-\rho_K^n) + \sum_{\edge\in \edges(K)} F_{K,\edge}^{n+1} = 0,
\\[1ex] \label{eq:e_int_i} &
\frac{|K|}{\delta t} (\rho_K^{n+1}e_K^{n+1}-\rho_K^n e_K^n) + \sum_{\edge\in \edges(K)} F_{K,\edge}^{n+1} e_\edge^{n+1}
+ p_K^{n+1} \sum_{\edge\in \edges(K)} |\edge|\,u_{K,\edge}^{n+1} \geq 0,
\\[1ex] \label{eq:etat_i} &
p_K^{n+1}=(\gamma-1)\, \rho_K^{n+1}\, e_K^{n+1},
\end{align}\end{subequations}
where $F_{K,\edge}^{n+1}$ is the mass flux through the face $\edge$, $e_\edge^{n+1}$ is an approximation of the internal energy at the face $\edge$, and $u_{K,\edge}^{n+1}$ stands for an approximation of the normal velocity to the face $\edge$.
Consistently with the boundary conditions, $u_{K,\edge}^{n+1}$ vanishes on every external face.
The mass flux $F_{K,\edge}^{n+1}$ reads:
\begin{equation}
F_{K,\edge}^{n+1}=|\edge|\ \rho_\edge^{n+1} u_{K,\edge}^{n+1},
\label{mass-flux}
\end{equation}
where $\rho_\edge^{n+1}$ stands for an approximation of the density on $\edge$.
Throughout the paper, we suppose that $\rho_K^n$, $e_K^n$, $\rho_\edge^n$ and $e_\edge^n$ are positive, for any $K\in\mesh$, $\edge\in\edgesint$, $0\leq n \leq N$, which is verified by the solutions of the schemes presented in \cite{her-14-ons,her-17-con,gas-17-mus,gou-17-sta} (of course, with positive initial conditions for $\rho$ and $e$).

\medskip
We recall the following two lemmas, which where proven in \cite{her-14-ons}.
They state discrete analogues of \eqref{eq:ent_m} and \eqref{eq:ent_e} respectively.
In their formulation, and throughout the paper, $\li a,\ b \ri$ stands for $[\min(a,b),\ \max(a,b)]$, for any real numbers $a$ and $b$.

\begin{lemma}\label{lem:mass}
Let $K \in \mesh$, $n$ be such that $0\leq n \leq N-1$ and let us suppose that \eqref{eq:mass_i} is verified.
Let $\varphi$ be a twice continuously differentiable function defined over $(0,+\infty)$.
Then we have:
\begin{multline*}
\frac{|K|}{\delta t} \Bigl[\varphi(\rho_K^{n+1})-\varphi(\rho_K^n)\Bigr]
+ \sum_{\edge\in \edges(K)} |\edge|\ \varphi(\rho_\edge^{n+1})\, u_{K,\edge}^{n+1} \\
+ \Bigl[\rho_K^{n+1} \varphi'(\rho_K^{n+1}) - \varphi(\rho_K^{n+1}) \Bigr] \sum_{\edge\in \edges(K)} |\edge|\  u_{K,\edge}^{n+1}
+|K|\, (R_m)_K^{n+1}= 0,
\end{multline*}
with:
\begin{multline} \label{eq:R_mass}\hspace{10ex}
|K|\,(R_m)_K^{n+1}= \frac 1 2 \frac{|K|}{\delta t}\ \varphi''(\rho_K^{n+1/2})\ (\rho_K^{n+1}-\rho_K^n)^2
\\
+ \sum_{\edge\in \edges(K)} |\edge|
\ \Bigl[\varphi(\rho_K^{n+1}) - \varphi(\rho_\edge^{n+1}) + \varphi'(\rho_K^{n+1}) (\rho_\edge^{n+1}-\rho_K^{n+1})\Bigr] u_{K,\edge}^{n+1},
\hspace{10ex}\end{multline}
where $\rho_K^{n+1/2}\in \li \rho_K^n ,\rho_K^{n+1} \ri$.
\end{lemma}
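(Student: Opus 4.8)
The plan is to reproduce, at the discrete level, the computation that led to the continuous identity \eqref{eq:ent_m}: there one multiplied the mass balance by $\varphi'(\rho)$ and invoked the chain rule. Since the discrete chain rule fails both in time and across faces, the strategy is to perform the same multiplication on \eqref{eq:mass_i} and to track precisely the two defects that arise, which together make up $|K|\,(R_m)_K^{n+1}$. Concretely, I would recall that $F_{K,\edge}^{n+1}=|\edge|\,\rho_\edge^{n+1}\,u_{K,\edge}^{n+1}$ and multiply the whole of \eqref{eq:mass_i} by $\varphi'(\rho_K^{n+1})$, then treat the time term and the convection term separately.

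For the discrete time derivative I would use a second-order Taylor expansion of $\varphi$ about $\rho_K^{n+1}$, evaluated at $\rho_K^n$. This furnishes a point $\rho_K^{n+1/2}\in\li\rho_K^n,\rho_K^{n+1}\ri$ such that
\[
\varphi'(\rho_K^{n+1})\,(\rho_K^{n+1}-\rho_K^n)=\varphi(\rho_K^{n+1})-\varphi(\rho_K^n)+\tfrac12\,\varphi''(\rho_K^{n+1/2})\,(\rho_K^{n+1}-\rho_K^n)^2 .
\]
Multiplying by $|K|/\delta t$ produces the discrete time derivative of $\varphi(\rho)$ together with the first term on the right-hand side of \eqref{eq:R_mass}.

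For the convection term I would proceed face by face, using the exact algebraic identity
\[
\varphi'(\rho_K^{n+1})\,\rho_\edge^{n+1}=\varphi(\rho_\edge^{n+1})+\bigl[\rho_K^{n+1}\varphi'(\rho_K^{n+1})-\varphi(\rho_K^{n+1})\bigr]+\bigl[\varphi(\rho_K^{n+1})-\varphi(\rho_\edge^{n+1})+\varphi'(\rho_K^{n+1})(\rho_\edge^{n+1}-\rho_K^{n+1})\bigr],
\]
which is checked at once by cancelling the $\varphi(\rho_\edge^{n+1})$, the $\varphi(\rho_K^{n+1})$ and the $\rho_K^{n+1}\varphi'(\rho_K^{n+1})$ contributions on the right. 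Multiplying by $|\edge|\,u_{K,\edge}^{n+1}$ and summing over $\edge\in\edges(K)$ yields, respectively, the discrete divergence term $\sum_\edge|\edge|\,\varphi(\rho_\edge^{n+1})\,u_{K,\edge}^{n+1}$, the factor multiplying $\sum_\edge|\edge|\,u_{K,\edge}^{n+1}$, and the second (face-based) contribution to \eqref{eq:R_mass}. Adding the time and convection contributions then gives the announced identity.

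I do not expect a genuine obstacle here, since the statement is an exact rewriting of \eqref{eq:mass_i} rather than an estimate. The only point requiring care is the asymmetry between the two remainders: the temporal defect is a true Taylor remainder and therefore carries $\varphi''$ evaluated at an unspecified intermediate density $\rho_K^{n+1/2}$, whereas the spatial defect is an exact algebraic quantity measuring the gap between $\varphi(\rho_\edge^{n+1})$ and its first-order expansion from the cell value $\rho_K^{n+1}$. Both must be assembled with the correct signs so that the coefficient of $|\edge|\,u_{K,\edge}^{n+1}$ collapses back to $\varphi'(\rho_K^{n+1})\,\rho_\edge^{n+1}$; this bookkeeping is essentially the entire content of the verification.
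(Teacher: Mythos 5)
Your proof is correct: multiplying \eqref{eq:mass_i} by $\varphi'(\rho_K^{n+1})$, handling the time increment by a second-order Taylor expansion (which produces the $\varphi''(\rho_K^{n+1/2})$ term with $\rho_K^{n+1/2}\in\li\rho_K^n,\rho_K^{n+1}\ri$) and the flux term by the exact algebraic identity you state, reassembles precisely into the claimed identity with the remainder \eqref{eq:R_mass}. This is essentially the same argument as the one the paper relies on (it cites \cite{her-14-ons} rather than reproving the lemma), so there is nothing to add.
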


\begin{lemma}\label{lem:energy}
Let $K \in \mesh$ and $n$ be such that $0\leq n \leq N-1$.
Let $\varphi$ be a twice continuously differentiable function defined over $(0,+\infty)$.
Then:
\begin{multline*}\hspace{10ex}
\varphi'(e_K^{n+1})\ \Bigl[ \frac{|K|}{\delta t} (\rho_K^{n+1}e_K^{n+1}-\rho_K^n e_K^n) + \sum_{\edge\in \edges(K)} F_{K,\edge}^{n+1} e_\edge^{n+1} \Bigr]
\\
=\frac{|K|}{\delta t} \Bigl[\rho_K^{n+1} \varphi(e_K^{n+1})-\rho_K^n \varphi(e_K^n) \Bigr]
+ \sum_{\edge\in \edges(K)} F_{K,\edge}^{n+1}\, \varphi(e_\edge^{n+1}) + |K|\,(R_e)_K^{n+1},
\hspace{10ex}\end{multline*}
with:
\begin{multline} \label{eq:R_e}
|K|\,(R_e)_K^{n+1}=\frac 1 2 \frac{|K|}{\delta t} \rho^n_K\ \varphi''(e_K^{n+1/2})(e_K^{n+1}-e_K^n)^2
\\+\sum_{\edge\in \edges(K)} F_{K,\edge}^{n+1}\ \Bigl[\varphi(e_K^{n+1}) - \varphi(e_\edge^{n+1}) + \varphi'(e_K^{n+1}) (e_\edge^{n+1}-e_K^{n+1}) \Bigr],
\end{multline}
where $e_K^{n+1/2} \in \li e_K^n, e_K^{n+1} \ri$.
\end{lemma}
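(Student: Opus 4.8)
The plan is to establish the asserted equality as a purely algebraic identity, whose single nontrivial ingredient is the discrete mass balance \eqref{eq:mass_i} on the cell $K$; exactly as in the continuous passage from \eqref{eq:e_int} to \eqref{eq:ent_e} (and as already in Lemma \ref{lem:mass}), it is mass conservation that makes the leftover terms cancel. The only analytic tool required is the second-order Taylor formula with Lagrange remainder: since $\varphi$ is twice continuously differentiable,
\[
\varphi(e_K^n)=\varphi(e_K^{n+1})+\varphi'(e_K^{n+1})\,(e_K^n-e_K^{n+1})+\tfrac12\,\varphi''(e_K^{n+1/2})\,(e_K^n-e_K^{n+1})^2,
\]
for some $e_K^{n+1/2}\in\li e_K^n,e_K^{n+1}\ri$, which is precisely the intermediate value occurring in \eqref{eq:R_e}.

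First I would treat the discrete time-derivative contribution. Substituting the Taylor expansion above into $\rho_K^n\,\varphi(e_K^n)$, the quadratic term is, up to the factor $\rho_K^n\,|K|/\delta t$ and a sign, exactly the first summand of $|K|\,(R_e)_K^{n+1}$; collecting the remaining terms then shows that the time part of the left-hand side, minus the corresponding main term of the right-hand side, minus the quadratic summand of $|K|\,(R_e)_K^{n+1}$, reduces to
\[
\bigl[\varphi'(e_K^{n+1})\,e_K^{n+1}-\varphi(e_K^{n+1})\bigr]\,\frac{|K|}{\delta t}\,(\rho_K^{n+1}-\rho_K^n).
\]

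Next I would treat the convective contribution. For each $\edge\in\edges(K)$ the bracket appearing in the flux part of \eqref{eq:R_e} is $\varphi(e_K^{n+1})-\varphi(e_\edge^{n+1})+\varphi'(e_K^{n+1})(e_\edge^{n+1}-e_K^{n+1})$; adding the main face term $F_{K,\edge}^{n+1}\varphi(e_\edge^{n+1})$ cancels the $\varphi(e_\edge^{n+1})$ contributions. Summing over the faces of $K$ then shows that the convective part of the left-hand side, minus the main face term $\sum_{\edge\in\edges(K)}F_{K,\edge}^{n+1}\varphi(e_\edge^{n+1})$, minus the face summand of $|K|\,(R_e)_K^{n+1}$, reduces to
\[
\bigl[\varphi'(e_K^{n+1})\,e_K^{n+1}-\varphi(e_K^{n+1})\bigr]\sum_{\edge\in\edges(K)}F_{K,\edge}^{n+1}.
\]

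Finally I would combine the two computations. Both residuals carry the common factor $\varphi'(e_K^{n+1})\,e_K^{n+1}-\varphi(e_K^{n+1})$, so their sum equals this factor multiplied by $\frac{|K|}{\delta t}(\rho_K^{n+1}-\rho_K^n)+\sum_{\edge\in\edges(K)}F_{K,\edge}^{n+1}$, which vanishes by the discrete mass balance \eqref{eq:mass_i}; this yields the claimed identity. I do not expect a genuine difficulty here, since the computation is elementary bookkeeping. The only two points deserving care are writing the Taylor remainder in mean-value form so that $e_K^{n+1/2}$ genuinely lies in $\li e_K^n,e_K^{n+1}\ri$, and recognizing that the time and convective residuals share the common factor $\varphi'(e_K^{n+1})\,e_K^{n+1}-\varphi(e_K^{n+1})$ that triggers the cancellation through mass conservation.
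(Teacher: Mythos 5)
Your proof is correct and is essentially the same argument as the paper's (which is not reproduced here but recalled from \cite{her-14-ons} and mirrored in the explicit-case computations of Section \ref{subsec:e_exp}): a second-order Taylor expansion of $\varphi(e_K^n)$ about $e_K^{n+1}$ combined with the discrete mass balance, the only difference being organizational, since the reference proof first passes to the non-conservative form of the convection operator via \eqref{eq:mass_i} and then Taylor-expands, whereas you expand directly and exhibit the common factor $e_K^{n+1}\varphi'(e_K^{n+1})-\varphi(e_K^{n+1})$ multiplying the mass-balance residual. You are also right to single out \eqref{eq:mass_i} as the one nontrivial ingredient: although the statement of Lemma \ref{lem:energy} omits this hypothesis (unlike Lemma \ref{lem:mass}), the identity fails without it, so it must be understood as assumed.
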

%
% ----------------------------------------------------
%
\subsection{Upwind schemes}

In this section, we suppose that the convection fluxes are approximated with a first order upwind scheme, {\it i.e.}, for $\edge \in \edgesint$, $\edge=K|L$, $\rho_\edge^{n+1}=\rho_K^{n+1}$ and $e_\edge^{n+1}=e_K^{n+1}$ if $u_{K,\edge} \geq 0$, $\rho_\edge^{n+1}=\rho_L^{n+1}$ and $e_\edge^{n+1}=e_L^{n+1}$ otherwise.
For $\edge \in \edgesext$, thanks to the boundary conditions, the convection fluxes vanish.
Let us consider the term associated to the faces in the expression \eqref{eq:R_mass} of the remainder term $(R_m)_K^{n+1}$.
We have, for any twice continuously differentiable function $\varphi$, any internal face $\edge=K|L$ and $0 \leq n \leq N-1$:
\begin{align*}
(T_m)_{K,\edge}^{n+1}& =\Bigl[\varphi(\rho_K^{n+1}) - \varphi(\rho_\edge^{n+1}) + \varphi'(\rho_K^{n+1}) (\rho_\edge^{n+1}-\rho_K^{n+1})\Bigr] u_{K,\edge}^{n+1}\\
& =
-\frac 1 2 \varphi''(\rho_{\edge,K}^{n+1})\ (\rho_\edge^{n+1}-\rho_K^{n+1})^2 u_{K,\edge}^{n+1},
\end{align*}
where $\rho_{\edge,K}^{n+1} \in \li \rho_\edge^{n+1}, \rho_K^{n+1} \ri$.
With the upwind choice, if $u_{K,\edge}^{n+1} \geq 0$, $\rho_\edge^{n+1}=\rho_K^{n+1}$ and $(T_m)_{K,\edge}^{n+1}$ vanishes.
If $u_{K,\edge}^{n+1} < 0$ and $\varphi''$ is a non-negative function ({\it i.e.} $\varphi$ is convex), $(T_m)_{K,\edge}^{n+1}$ is non-negative and so is $(R_m)_K^{n+1}$, for any $K \in \mesh$.
Since $\varphi_\rho$ defined by \eqref{eq:vphis} is indeed convex, we thus have, applying Lemma \ref{lem:mass}, that any solution to Equation \eqref{eq:mass_i} of the scheme satisfies, for $K \in \mesh$ and $0 \leq n \leq N-1$:
\begin{multline}\label{eq:ent_mi}
\frac{|K|}{\delta t} \Bigl[\varphi_\rho(\rho_K^{n+1})-\varphi_\rho(\rho_K^n)\Bigr]
+ \sum_{\edge\in \edges(K)} |\edge|\ \varphi_\rho(\rho_\edge^{n+1}) u_{K,\edge}^{n+1}
\\ + \Bigl[\rho_K^{n+1} \varphi_\rho'(\rho_K^{n+1}) - \varphi_\rho(\rho_K^{n+1} \Bigr] \sum_{\edge\in \edges(K)} |\edge|\  u_{K,\edge}^{n+1}
\leq 0.
\end{multline}
By the same arguments, we get that $(R_e)_K^{n+1} \geq 0$ for any regular convex function $\varphi$, for any $K\in\mesh$ and $0 \leq n \leq N-1$.
Hence, since $\varphi_e$ defined by Equation \eqref{eq:vphis} is convex, we get that any solution to \eqref{eq:e_int_i} satisfies:
\begin{multline}\label{eq:ent_ei}
\frac{|K|}{\delta t} \Bigl[\rho_K^{n+1} \varphi_e(e_K^{n+1})-\rho_K^n \varphi_e(e_K^n) \Bigr]
+ \sum_{\edge\in \edges(K)} F_{K,\edge}^{n+1}\, \varphi_e(e_\edge^{n+1})
\\+ \varphi'_e(e_K^{n+1})\,p_K^{n+1} \sum_{\edge\in \edges(K)} |\edge|\  u_{K,\edge}^{n+1}
\leq 0.
\end{multline}
We are thus in position to state the following local entropy estimate ({\it i.e.} the following discrete analogue of Inequality \eqref{eq:entropy}).

\begin{theorem}[Discrete entropy inequality, implicit upwind scheme]\label{thrm:impl_upw}
Any solution of the scheme \eqref{eq:impl} satisfies, for any $K\in\mesh$ and $0 \leq n \leq N-1$:
\[
\frac{|K|}{\delta t} (\eta_K^{n+1}-\eta_K^n)
+ \sum_{\edge\in \edges(K)} |\edge|\ \eta_\edge^{n+1} u_{K,\edge}^{n+1} \leq 0,
\]
with $\eta_K^m= \varphi_\rho(\rho_K^m) + \rho_K^m\, \varphi_e(e_K^m)$, $m=n,\ n+1$, and $\eta_\edge^{n+1}= \varphi_\rho(\rho_\edge^{n+1}) + \rho_\edge^{n+1} \,\varphi_e(e_\edge^{n+1})$.
\end{theorem}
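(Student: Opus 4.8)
The plan is to reproduce at the discrete level the formal computation that led from \eqref{eq:ent_m} and \eqref{eq:ent_e} to \eqref{eq:entropy}: I would simply add the two cell inequalities \eqref{eq:ent_mi} and \eqref{eq:ent_ei} that have just been established, and then check that the leftover terms cancel exactly. The genuinely analytical part of the argument is already behind us, since it is precisely the convexity of $\varphi_\rho$ and $\varphi_e$ together with the upwind choice of $\rho_\edge^{n+1}$ and $e_\edge^{n+1}$ that controls the sign of the remainders $(R_m)_K^{n+1}$ and $(R_e)_K^{n+1}$ produced by Lemmas \ref{lem:mass} and \ref{lem:energy}, and thereby turns the two equalities into the inequalities \eqref{eq:ent_mi} and \eqref{eq:ent_ei}. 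What remains is purely algebraic bookkeeping.

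First I would add the two time-difference contributions. These combine immediately, since by the definition of $\eta_K^m$ one has $\varphi_\rho(\rho_K^{m}) + \rho_K^{m}\,\varphi_e(e_K^{m}) = \eta_K^{m}$ for $m=n,\ n+1$, so the leading term becomes $\tfrac{|K|}{\delta t}(\eta_K^{n+1} - \eta_K^n)$, as in the statement. Next I would treat the convective fluxes. The mass–entropy flux of \eqref{eq:ent_mi} reads $\sum_{\edge} |\edge|\,\varphi_\rho(\rho_\edge^{n+1})\,u_{K,\edge}^{n+1}$, whereas the internal–energy–entropy flux of \eqref{eq:ent_ei} reads $\sum_{\edge} F_{K,\edge}^{n+1}\,\varphi_e(e_\edge^{n+1})$. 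Substituting the mass flux \eqref{mass-flux}, namely $F_{K,\edge}^{n+1} = |\edge|\,\rho_\edge^{n+1}\,u_{K,\edge}^{n+1}$, the latter becomes $\sum_{\edge} |\edge|\,\rho_\edge^{n+1}\,\varphi_e(e_\edge^{n+1})\,u_{K,\edge}^{n+1}$, and the two sums add up to
\[
\sum_{\edge} |\edge|\,\bigl[\varphi_\rho(\rho_\edge^{n+1}) + \rho_\edge^{n+1}\,\varphi_e(e_\edge^{n+1})\bigr]\,u_{K,\edge}^{n+1} = \sum_{\edge} |\edge|\,\eta_\edge^{n+1}\,u_{K,\edge}^{n+1},
\]
which is exactly the flux term of the theorem.

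Finally, the two remaining contributions — both proportional to the discrete velocity divergence $\sum_{\edge} |\edge|\,u_{K,\edge}^{n+1}$ — must vanish in combination. This is the discrete counterpart of the identity \eqref{prop-entropie}, and it is the step I would single out as the crux, even though it is elementary: it works only because the equation of state \eqref{eq:etat_i} is imposed pointwise on the cell $K$ at time level $n+1$. Inserting the explicit forms \eqref{eq:vphis} gives $\rho_K^{n+1}\,\varphi_\rho'(\rho_K^{n+1}) - \varphi_\rho(\rho_K^{n+1}) = \rho_K^{n+1}$, while \eqref{eq:etat_i} yields $\varphi_e'(e_K^{n+1})\,p_K^{n+1} = -\rho_K^{n+1}$; their sum is zero, so the coefficient of $\sum_{\edge} |\edge|\,u_{K,\edge}^{n+1}$ is identically null. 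Adding \eqref{eq:ent_mi} and \eqref{eq:ent_ei} then produces precisely the claimed estimate, with no boundary contribution since $u_{K,\edge}^{n+1}=0$ on external faces. The only point requiring care is to keep the discrete cancellation exact rather than merely consistent, which is why the pointwise validity of \eqref{eq:etat_i} and the precise algebraic choice of $\varphi_\rho$ and $\varphi_e$ in \eqref{eq:vphis} are essential.
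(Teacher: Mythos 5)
Your proposal is correct and follows exactly the paper's own proof: the paper likewise obtains the result by summing \eqref{eq:ent_mi} and \eqref{eq:ent_ei} and invoking the cancellation \eqref{prop-entropie}, which you simply verify explicitly from \eqref{eq:vphis} and the pointwise equation of state \eqref{eq:etat_i}. Your additional bookkeeping (combining the fluxes via \eqref{mass-flux} into $\eta_\edge^{n+1}$) is precisely what the paper leaves implicit.
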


\begin{proof}
The desired relation is obtained by summing the inequalities \eqref{eq:ent_mi} and \eqref{eq:ent_ei}, using \eqref{prop-entropie}.
\end{proof}
%
%rh
Of course, this local entropy inequality also yields the global discrete inequality analogue to \eqref{eq:global-entropy}; furthermore, passing to the limit on the upwind implicit (or pressure correction) scheme applied the Euler equations, this local estimate also yields the Lax consistency property stated in the introduction.
%fin rh ----------------------------------------------------
%
\subsection{Beyond the upwind approximation: reducing the diffusion}

The aim of this section is to try to relax the requirement of an upwind approximation for the convection fluxes in \eqref{eq:mass_i} and \eqref{eq:e_int_i}, in order to reduce the numerical diffusion.
We shall see that this leads to a condition which is reminiscent of the limitation requirement which is the core of a MUSCL procedure \cite{van-79-tow}: in order to yield an entropy inequality (instead of, for a MUSCL technique, to yield a maximum principle), the approximation of the unknowns at the face must be "sufficiently close to" the upwind approximation.
The entropy inequality is then obtained only in the weak sense.
The technique to reach this result consists in splitting the rest terms appearing in Lemma \ref{lem:mass} and \ref{lem:energy} in two parts: the first one is non-negative under condition for the face approximation (hence the above mentioned limitation requirement); the second one is conservative, which allows to bound it in a discrete negative Sobolev norm (this explains why the entropy estimate is only a weak one).
This construction relies on the following technical lemma \cite[Lemma 2.3]{gal-08-unc}.

\begin{lemma}\label{lem:int_convexity}
Let $\varphi$ be a strictly convex and continuously differentiable function over $I \subset \xR$.
Let $x_K \in I$ and $x_L \in I$ be two real numbers.
Then the relation
\begin{align}
	\label{eq:conv_int}
	\varphi(x_K) + \varphi'(x_K)\,(x_{KL}-x_K) = \varphi(x_L) + \varphi'(x_L)\,(x_{KL}-x_L ) \mbox{ if } x_K \neq x_L, \\
	x_{KL} = x_K=X_L \mbox{ otherwise }	
\end{align}
uniquely defines the real number $x_{KL}$ in $\li x_K, x_L \ri$.  
\end{lemma}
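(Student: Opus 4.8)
The plan is to read the defining relation geometrically: each side is the value at the point $x_{KL}$ of the affine tangent to $\varphi$ at one of the two contact points $x_K$, $x_L$, so that $x_{KL}$ is nothing but the abscissa at which these two tangent lines cross. First I would dispose of the degenerate case $x_K = x_L$, for which the statement holds by definition. For $x_K \neq x_L$, I assume without loss of generality $x_K < x_L$ and introduce the affine function
\[
g(x) = \bigl[\varphi(x_K) + \varphi'(x_K)\,(x - x_K)\bigr] - \bigl[\varphi(x_L) + \varphi'(x_L)\,(x - x_L)\bigr],
\]
whose unique zero, provided it exists and lies in the right interval, is exactly the sought $x_{KL}$.

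Next I would settle existence and uniqueness of the root. Strict convexity of $\varphi$ makes $\varphi'$ strictly increasing, so $\varphi'(x_K) < \varphi'(x_L)$ and the slope of $g$, namely $\varphi'(x_K) - \varphi'(x_L)$, is nonzero; hence $g$ is a nonconstant affine function and vanishes at exactly one point. Solving $g(x) = 0$ even yields a closed-form expression for $x_{KL}$. This already shows that the relation uniquely defines a real number; what remains is to place it correctly.

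The substantive step is therefore to localize this root inside $\li x_K, x_L \ri$, and here I would use that the graph of a convex function lies above each of its tangent lines. Evaluating $g$ at the endpoints: at $x = x_K$ the second bracket is the tangent at $x_L$ read at $x_K$, which is at most $\varphi(x_K)$, so $g(x_K) \geq 0$; at $x = x_L$ the first bracket is the tangent at $x_K$ read at $x_L$, which is at most $\varphi(x_L)$, so $g(x_L) \leq 0$. Since $x_K \neq x_L$ and $\varphi$ is \emph{strictly} convex, both inequalities are strict, giving $g(x_K) > 0 > g(x_L)$. As $g$ is continuous and strictly decreasing, its unique zero lies in the open interval $(x_K, x_L) \subset \li x_K, x_L \ri$, which completes the argument.

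I do not anticipate a genuine difficulty beyond keeping the bookkeeping of the two tangent lines straight: the only facts used are the strict monotonicity of $\varphi'$ and the tangent-line characterization of convexity, and the geometric picture of two tangents necessarily meeting between their points of contact makes the endpoint sign computation transparent.
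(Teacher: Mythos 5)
Your proof is correct. Note that the paper itself offers no proof of this lemma: it is quoted verbatim from an external reference (Lemma 2.3 of \cite{gal-08-unc}), so there is no internal argument to compare against; your write-up in fact supplies the missing justification. The structure is sound: the defining relation says the tangents at $x_K$ and $x_L$ agree at $x_{KL}$, the difference $g$ of the two tangent lines is affine with slope $\varphi'(x_K)-\varphi'(x_L)\neq 0$ (strict convexity making $\varphi'$ strictly increasing), hence has a unique zero, and the tangent-line inequality for convex functions gives $g(x_K)\geq 0 \geq g(x_L)$, which localizes that zero in $\li x_K, x_L\ri$. Two small remarks. First, the strictness of the endpoint inequalities (graph \emph{strictly} above the tangent off the contact point for a strictly convex differentiable function) is a standard fact but is asserted rather than proved; it is not actually needed, since the weak inequalities together with the strict monotonicity of $g$ already place the unique root in the closed interval $\li x_K, x_L\ri$, which is all the lemma claims. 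Second, your closed-form solution for $x_{KL}$ is consistent with Remark \ref{rmrk:xKL}: for $\varphi(z)=z^2$ it reduces to the centered value $(x_K+x_L)/2$, a useful sanity check.
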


\begin{remark}[$x_{KL}$ for $\varphi(z)=z^2$]\label{rmrk:xKL}
Let us consider the specific function $\varphi(z)=z^2$.
Then, an easy computation yields 
$x_{KL}=(x_K + x_L) / 2$ \textit{i.e.} the centered approximation.
icici
\end{remark}

\medskip
Let $\varphi_\rho$ be the function defined by \eqref{eq:vphis}
For $\edge\in\edgesint$, $\edge=K|L$, let $\rho_{KL}^{n+1}$ be the real number defined by Equation \eqref{eq:conv_int} with $\varphi = \varphi_\rho$, $x_K=\rho_K^{n+1}$ and $x_L=\rho_L^{n+1}$, and let $(\delta \varphi_\rho)_\edge^{n+1}$ be the following quantity:
\begin{multline}\label{eq:def_drho}
	(\delta \varphi_\rho)^{n+1}_\edge=\varphi_\rho(\rho_K^{n+1})-\varphi_\rho(\rho_\edge^{n+1})
	+ \varphi'_\rho(\rho_K^{n+1})\ \bigr[\rho_{KL}^{n+1}-\rho_K^{n+1}\bigl]
	\\+ \frac 1 2\,\bigl[\varphi'_\rho(\rho_K^{n+1})+\varphi'_\rho(\rho_L^{n+1})\bigr]\ \bigl[\rho_\edge^{n+1}-\rho_{KL}^{n+1}\bigr].
\end{multline}
Note that, since
\[
\varphi_\rho(\rho_K^{n+1})+\varphi'_\rho(\rho_K^{n+1})\ \bigr[\rho_{KL}^{n+1}-\rho_K^{n+1}\bigl]
= \varphi_\rho(\rho_L^{n+1})+\varphi'_\rho(\rho_L^{n+1})\ \bigr[\rho_{KL}^{n+1}-\rho_L^{n+1}\bigl],
\]
the quantity $(\delta \varphi_\rho)^{n+1}_\edge$ only depends on $\edge$.
An easy computation shows that the term associated to the face $\edge$ in the expression \eqref{eq:R_mass} of the remainder term $(R_m)_K^{n+1}$ satisfies:
\begin{align*}
(F_m)_{K,\edge}^{n+1} &=
|\edge|\,\Bigl[\varphi_\rho(\rho_K^{n+1}) - \varphi_\rho(\rho_\edge^{n+1}) + \varphi_\rho'(\rho_K^{n+1}) (\rho_\edge^{n+1}-\rho_K^{n+1})\Bigr] u_{K,\edge}^{n+1}
\\
& = |\edge|\,(\delta \varphi_\rho)^{n+1}_\edge\ u_{K,\edge}^{n+1} + (F^R_m)_{K,\edge}^{n+1}
\end{align*}
with $\displaystyle
(F^R_m)_{K,\edge}^{n+1} = |\edge|\,\frac 1 2\,\Bigl[\varphi'_\rho(\rho_K^{n+1})-\varphi'_\rho(\rho_L^{n+1})\Bigr]
\ (\rho_\edge^{n+1}-\rho_{KL}^{n+1})\ u_{K,\edge}^{n+1}.$

Let us assume that, for $\edge \in \edgesint$, $\edge=K|L$ and for $0 \leq n \leq N-1$:
\begin{equation}\label{eq:H-rho-imp}
(H_\rho^{\rm imp}) \qquad \rho_\edge^{n+1} \in \li \rho_K^{n+1}, \ \rho_{KL}^{n+1} \ri \mbox{ if } u_{K,\edge}^{n+1} \geq 0,
\ \rho_\edge^{n+1} \in \li \rho_L^{n+1},\ \rho_{KL}^{n+1} \ri \mbox{ otherwise}.
\end{equation}
Then, since $\varphi'_\rho$ is an increasing function, $(F^R_m)_{K,\edge}^{n+1}\geq 0$.
Let us define $(\delta\! R_m)_K^{n+1}$, $K \in \mesh$, $0 \leq n \leq N-1$ by:
\begin{equation}
|K|\,(\delta\! R_m)_K^{n+1} = \sum_{\edge\in\edges(K)} |\edge|\ (\delta \varphi_\rho)^{n+1}_\edge\ u_{K,\edge}^{n+1}.
\label{deltaRm}	
\end{equation}
Then, under assumption $(H_\rho^{\rm imp})$, we get:
\begin{multline}\label{eq:ent_mi_c}
\frac{|K|}{\delta t} \bigl[\varphi_\rho(\rho_K^{n+1})-\varphi_\rho(\rho_K^n)\bigr]
+ \sum_{\edge\in \edges(K)} |\edge|\ \varphi_\rho(\rho_\edge^{n+1}) u_{K,\edge}^{n+1}
\\
+ \Bigl[\rho_K^{n+1} \varphi_\rho'(\rho_K^{n+1}) - \varphi_\rho(\rho_K^{n+1}) \Bigr] \sum_{\edge\in \edges(K)} |\edge|\  u_{K,\edge}^{n+1}
+ |K|\ (\delta\! R_m)_K^{n+1}
\leq 0.
\end{multline}
This inequality is an important step in proving that a limit of the discrete solutions satisfy a weak entropy inequality, provided that the remainder term $\delta\!R_m$ vanishes when the space and time discretization step tend to zero, under stability assumptions on the solution of the scheme; we now state this estimate on the remainder.
We first start by some discrete norms.
Let $h_\mesh$ be the space discretization:
\[
h_\mesh=\max_{K\in\mesh} {\rm diam}(K).
\]

\begin{definition}[Discrete BV semi-norm and weak $W^{-1,1}$ norm]
For a family $(z_K^n)_{K\in\mesh, 0 \leq n \leq N}\subset \xR$, let us define the following norms:
\begin{equation}
\begin{array}{l} \displaystyle
\normxbv{z}= \sum_{n=0}^N \delta t \sum_{\edge=K|L \in \edgesint} |\edge|\ |z^n_L-z^n_K|,
 \\ \displaystyle
\norm{z}_{-1,1}  = \sup_{\displaystyle \psi \in \xC^\infty_c([0,T)\times\bar\Omega)} \quad
\frac 1 {\displaystyle \sup_{\bfx\in\Omega,\ t \in (0,T)} \norm{\gradi \psi(\bfx,t)}}
\ \Bigl[\sum_{n=0}^N \delta t \sum_{K \in \mesh}|K|\ z_K^n \psi_K^n\Bigr],
\end{array}
\end{equation}
where $\psi_K^n$ stands for $\psi(\bfx_K,t_n)$, with $\bfx_K$ the mass center of $K$. 
Note that this latter weak norm is the discrete equivalent of the continuous dual norm of $v \in L^1(\Omega)$, defined by
\[
\norm{v}_{(W^{1,+\infty})'} =  \sup_{\displaystyle \psi \in \xC^\infty_c([0,T)\times\bar\Omega)} \quad
\frac 1 {\displaystyle \sup_{\bfx\in\Omega,\ t \in (0,T)} \norm{\gradi \psi(\bfx,t)}}
\ \int_0^T \int_\Omega v \psi \dx \dt.
\]
\end{definition}

Then, with these notations, we may state the following bound for $\delta\!R_m$.

\begin{lemma}\label{lmm:cons_rho}
Let $M>1$ and let us suppose that $\rho_K^n \leq M$, $1/ \rho_K^n \leq M$ and $|u_{K,\edge}| \leq M$, for $K \in \mesh$, $\edge \in \edges(K)$ and $0 \leq n \leq N$.
Let us denote by $|\varphi_\rho'|_\infty$ the maximum value taken by $|\varphi_\rho'|$ over the interval $[1/M,\ M]$ (which, by convexity of $\varphi_\rho$ is equal to either $|\varphi'_\rho(1/M)|$ or $|\varphi'_\rho(M)|$).
Then, under assumption \eqref{eq:H-rho-imp}, the quantity $\delta\! R_m$ defined by \eqref{deltaRm},\eqref{eq:def_drho} satisfies:
\[
\norm{\delta\! R_m}_{-1,1} \leq 3\, M\ |\varphi_\rho'|_\infty\ \normxbv{\rho}\ h_\mesh.
\]
\end{lemma}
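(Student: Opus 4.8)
The plan is to estimate the bracketed quantity appearing in the definition of $\norm{\cdot}_{-1,1}$ for an arbitrary test function $\psi\in\xC^\infty_c([0,T)\times\bar\Omega)$, namely $\sum_{n}\delta t\sum_{K\in\mesh}|K|\,(\delta\!R_m)_K^{n+1}\,\psi_K^{n+1}$, and to show that it is bounded by $3M\,|\varphi_\rho'|_\infty\,\normxbv{\rho}\,h_\mesh$ times $\sup\norm{\gradi\psi}$; dividing and taking the supremum over $\psi$ then gives the claim. Substituting \eqref{deltaRm}, the double sum becomes $\sum_n\delta t\sum_{K}\sum_{\edge\in\edges(K)}|\edge|\,(\delta\varphi_\rho)_\edge^{n+1}\,u_{K,\edge}^{n+1}\,\psi_K^{n+1}$. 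The crucial structural observation is that $(\delta\varphi_\rho)_\edge^{n+1}$ depends only on the face $\edge$ and not on the adjacent cell, so I would reorganize the cell sum into a sum over faces. External faces carry $u_{K,\edge}^{n+1}=0$ and drop out; for each internal face $\edge=K|L$ the two contributions combine, using the conservativity $u_{L,\edge}^{n+1}=-u_{K,\edge}^{n+1}$, into $|\edge|\,(\delta\varphi_\rho)_\edge^{n+1}\,u_{K,\edge}^{n+1}\,(\psi_K^{n+1}-\psi_L^{n+1})$. This is the step that produces the difference $\psi_K^{n+1}-\psi_L^{n+1}$, and hence the gain of one power of $h_\mesh$; it is the discrete analogue of the mechanism by which a conservative remainder pairs only against the gradient of the test function.

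The second ingredient is a face-wise pointwise bound $|(\delta\varphi_\rho)_\edge^{n+1}|\le 3\,|\varphi_\rho'|_\infty\,|\rho_L^{n+1}-\rho_K^{n+1}|$. I would obtain it by estimating separately the three terms of \eqref{eq:def_drho}. The key point is that all the densities involved lie in the interval $\li\rho_K^{n+1},\rho_L^{n+1}\ri$: this holds for $\rho_{KL}^{n+1}$ by Lemma \ref{lem:int_convexity} and for $\rho_\edge^{n+1}$ by the limitation assumption $(H_\rho^{\rm imp})$ of \eqref{eq:H-rho-imp}. Consequently each of the increments $\rho_K^{n+1}-\rho_\edge^{n+1}$, $\rho_{KL}^{n+1}-\rho_K^{n+1}$ and $\rho_\edge^{n+1}-\rho_{KL}^{n+1}$ is bounded in absolute value by $|\rho_L^{n+1}-\rho_K^{n+1}|$, while the $\xL^\infty$ control $1/M\le\rho_K^n\le M$ places every argument in $[1/M,M]$, so that $|\varphi_\rho(\rho_K^{n+1})-\varphi_\rho(\rho_\edge^{n+1})|\le|\varphi_\rho'|_\infty\,|\rho_K^{n+1}-\rho_\edge^{n+1}|$ and $\frac12|\varphi_\rho'(\rho_K^{n+1})+\varphi_\rho'(\rho_L^{n+1})|\le|\varphi_\rho'|_\infty$. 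Summing the three resulting contributions gives the constant $3$.

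Finally I would assemble the estimate. By the smoothness of $\psi$ one has $|\psi_K^{n+1}-\psi_L^{n+1}|\le h_\mesh\,\sup\norm{\gradi\psi}$, since the distance between the centers of two neighbouring cells is controlled by $h_\mesh$; combining this with $|u_{K,\edge}^{n+1}|\le M$ and the pointwise bound above, the face sum is majorized by $3M\,|\varphi_\rho'|_\infty\,h_\mesh\,\sup\norm{\gradi\psi}\,\sum_n\delta t\sum_{\edge=K|L}|\edge|\,|\rho_L^{n+1}-\rho_K^{n+1}|$. Recognizing the last factor as $\normxbv{\rho}$ (up to the harmless shift of the time index) and dividing by $\sup\norm{\gradi\psi}$ concludes. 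The main obstacle is the face-wise bound on $(\delta\varphi_\rho)_\edge$: everything hinges on confining both $\rho_\edge^{n+1}$ and $\rho_{KL}^{n+1}$ to $\li\rho_K^{n+1},\rho_L^{n+1}\ri$, which is precisely the role of the limitation hypothesis $(H_\rho^{\rm imp})$ together with the convexity construction of $\rho_{KL}^{n+1}$ in Lemma \ref{lem:int_convexity}. By contrast, the conservativity rearrangement, though essential for extracting the factor $h_\mesh$, is routine.
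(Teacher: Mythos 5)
Your proof is correct and follows essentially the same route as the paper's: the face-wise bound $|(\delta\varphi_\rho)_\edge^{n+1}|\le 3\,|\varphi_\rho'|_\infty\,|\rho_K^{n+1}-\rho_L^{n+1}|$ (using $(H_\rho^{\rm imp})$ and Lemma \ref{lem:int_convexity} to confine $\rho_\edge^{n+1}$ and $\rho_{KL}^{n+1}$ to $\li \rho_K^{n+1},\rho_L^{n+1}\ri$), followed by the conservativity rearrangement into a sum over internal faces pairing $(\delta\varphi_\rho)_\edge^{n+1}u_{K,\edge}^{n+1}$ against $\psi_K^{n+1}-\psi_L^{n+1}$, and the final assembly with the $\xL^\infty$ and BV bounds. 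Your write-up is in fact slightly more explicit than the paper's (detailing the term-by-term origin of the constant $3$ and the vanishing of boundary-face contributions), but the decomposition and key estimates are identical.
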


\begin{proof}
Let us consider the quantity $(\delta \varphi_\rho)_\edge^{n+1}$ defined by \eqref{eq:def_drho}.
Since both $\rho_\edge^{n+1}$ and $\rho_{KL}^{n+1}$ lie in the interval $\li \rho_K^{n+1},\ \rho_L^{n+1}\ri$, we have, by convexity of $\varphi_\rho$:
\[
|(\delta \varphi_\rho)_\edge^{n+1}| \leq 3\, \max\bigl(|\varphi'_\rho(\rho_K^{n+1})|,\ |\varphi'_\rho(\rho_L^{n+1})|\bigr)\ |\rho_K^{n+1}-\rho_L^{n+1}|.
\]
Let $\psi$ be a function of $\xC^\infty_c(\Omega\times(0,T))$.
We have, thanks to the conservativity of the remainder term:
\begin{align*}
T&=
\sum_{n=0}^{N-1} \delta t \sum_{K \in \mesh}|K|\ (\delta\! R_m)_K^{n+1} \psi_K^{n+1} \\
&=
\sum_{n=0}^{N-1} \delta t \sum_{\edge=K|L \in \edgesint} |\edge|\ (\delta \varphi_\rho)_\edge^{n+1}\ (\psi_K^{n+1}-\psi_L^{n+1})\ u_{K,\edge}.
\end{align*}
Therefore,
\[
|T| \leq 3\, |\varphi'_\rho|_\infty\, M\ \bigl[\max_{\bfx\in\Omega,\ t \in (0,T)} \norm{\gradi \psi(\bfx,t)}\bigr]\ h_\mesh
\sum_{n=0}^{N-1} \delta t \sum_{\edge=K|L \in \edgesint} |\edge|\ |\rho_K^{n+1}-\rho_L^{n+1}|,
\]
which concludes the proof.
\end{proof}

\medskip
Following the same line of thought for the internal energy balance, for $\edge\in\edgesint$, $\edge=K|L$, and $0\leq n \leq N-1$, we denote by $e_{KL}^{n+1}$ the real number defined by Equation \eqref{eq:conv_int} with $\varphi$ being the function $\varphi_e$ defined by \eqref{eq:vphis}, $x_K=e_K$ and $x_L=e_L$.
Then we denote by $(\delta \varphi_e)_\edge^{n+1}$ the following quantity:
\begin{multline}
	\label{eq:def_de}
	(\delta \varphi_e)^{n+1}_\edge=\varphi_e(e_K^{n+1})-\varphi_e(e_\edge^{n+1})
+ \varphi'_e(e_K^{n+1})\ \bigr[e_{KL}^{n+1}-e_K^{n+1}\bigl]
\\+ \frac 1 2\,\bigl[\varphi'_e(e_K^{n+1})+\varphi'_e(e_L^{n+1})\bigr]\ \bigl[e_\edge^{n+1}-e_{KL}^{n+1}\bigr],
\end{multline}
and by $(\delta \! R_e)_K^{n+1}$ the remainder term given by:
\begin{equation}
	|K|\,(\delta \!R_e)_K^{n+1} = \sum_{\edge\in\edges(K)}  (\delta \varphi_e)^{n+1}_\edge\ F_{K,\edge}^{n+1}
	\label{deltaRe}
\end{equation}

Next we introduce the assumption:
\begin{equation}\label{eq:He}
(H_e^{\rm imp}) \qquad 
\left|\begin{array}{l} 
e_\edge^{n+1}\in \li e_K^{n+1},\ e_{KL}^{n+1}\ri \mbox{ if } u_{K,\edge}^{n+1} \geq 0, \\
 e_\edge^{n+1}\in \li e_L^{n+1},\ e_{KL}^{n+1}\ri \mbox{ otherwise}.
	\end{array}\right.
\end{equation}
Then, under assumption $(H_e^{\rm imp})$, we get:
\begin{multline}\label{eq:ent_ei_c}  \frac{|K|}{\delta t} \Bigl[\rho_K^{n+1} \varphi_e(e_K^{n+1})-\rho_K^n \varphi_e(e_K^n)\Bigr]
+ \sum_{\edge\in \edges(K)} \varphi_e(e_\edge^{n+1}) F_{K,\edge}^{n+1}
\\
+ \varphi_e'(e_K^{n+1}) p_K^{n+1} \sum_{\edge\in \edges(K)} |\edge|\  u_{K,\edge}^{n+1}
+ |K|\ (\delta\!R_e)_K^{n+1}
\leq 0.
 \hspace{10ex} \end{multline}
In addition, $\delta\!R_e$ satisfies the following estimate.

\begin{lemma}\label{lmm:cons_e}
Let $M$ be a real number greater than 1 and let us suppose that $\rho_K^n \leq M$, $e_K^n \leq M$, $1/ e_K^n \leq M$ and $|u_{K,\edge}| \leq M$, for $K \in \mesh$, $\edge \in \edges(K)$ and $0 \leq n \leq N$.
Let us define $|\varphi_e'|_\infty=\max(|\varphi'_e(1/M)|,\ |\varphi'_e(M)|)$.
Then, under assumption \eqref{eq:He}, the quantity $\delta\! R_e$ defined by \eqref{deltaRe},\eqref{eq:def_de} satisfies:
\[
\norm{\delta\! R_e}_{-1,1}  \leq 3\, M^2\ |\varphi_e'|_\infty\ \normxbv{e}\ h_\mesh.
\]
\end{lemma}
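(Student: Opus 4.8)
The plan is to transcribe almost verbatim the proof of Lemma~\ref{lmm:cons_rho}, replacing the density $\rho$ by the internal energy $e$ and the factor $|\edge|\,u_{K,\edge}^{n+1}$ by the full mass flux $F_{K,\edge}^{n+1}=|\edge|\,\rho_\edge^{n+1}\,u_{K,\edge}^{n+1}$. The extra power of $M$ in the announced bound ($M^2$ in place of $M$) will be produced precisely by the density factor hidden in $F_{K,\edge}^{n+1}$. First I would establish the pointwise estimate
\[
|(\delta \varphi_e)_\edge^{n+1}| \leq 3\, |\varphi_e'|_\infty\ |e_K^{n+1}-e_L^{n+1}|.
\]
To this end, I would note that by Lemma~\ref{lem:int_convexity} the value $e_{KL}^{n+1}$ lies in $\li e_K^{n+1},\ e_L^{n+1}\ri$, and that by assumption $(H_e^{\rm imp})$ so does $e_\edge^{n+1}$; hence all the points entering \eqref{eq:def_de} lie in $\li e_K^{n+1},\ e_L^{n+1}\ri\subseteq[1/M,\ M]$, each of the three increments in \eqref{eq:def_de} is bounded in modulus by $|e_K^{n+1}-e_L^{n+1}|$, and $|\varphi_e'|\leq|\varphi_e'|_\infty$ on that interval. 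Treating the first term of \eqref{eq:def_de} with the mean value theorem and the other two directly, the three contributions each produce one copy of $|\varphi_e'|_\infty\,|e_K^{n+1}-e_L^{n+1}|$, whence the factor $3$.

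Next I would exploit the conservativity of the remainder. Since $(\delta \varphi_e)_\edge^{n+1}$ depends only on the face $\edge$ and $F_{L,\edge}^{n+1}=-F_{K,\edge}^{n+1}$, a discrete integration by parts gives, for any $\psi\in\xC^\infty_c([0,T)\times\bar\Omega)$,
\[
T:=\sum_{n=0}^{N-1}\delta t\sum_{K\in\mesh}|K|\,(\delta\!R_e)_K^{n+1}\,\psi_K^{n+1}
=\sum_{n=0}^{N-1}\delta t\sum_{\edge=K|L\in\edgesint}(\delta \varphi_e)_\edge^{n+1}\,(\psi_K^{n+1}-\psi_L^{n+1})\,F_{K,\edge}^{n+1},
\]
the external faces dropping out because the mass flux vanishes there. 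I would then insert $|\psi_K^{n+1}-\psi_L^{n+1}|\leq h_\mesh\,\sup_{\bfx,t}\norm{\gradi\psi}$, the first-step bound, and $|F_{K,\edge}^{n+1}|\leq|\edge|\,M^2$, and recognise $\sum_{n}\delta t\sum_{\edge=K|L}|\edge|\,|e_K^{n+1}-e_L^{n+1}|=\normxbv{e}$. Dividing by $\sup_{\bfx,t}\norm{\gradi\psi}$ and taking the supremum over $\psi$ then yields the claim.

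The main obstacle I anticipate is the apparently innocent estimate $|F_{K,\edge}^{n+1}|\leq|\edge|\,M^2$, which requires the face density bound $\rho_\edge^{n+1}\leq M$: the hypotheses of the lemma control the \emph{cell} densities $\rho_K^n$ but not directly the \emph{face} density. This is ultimately harmless, since by construction of the scheme the face density is either an upwind cell value or a limited value lying in $\li\rho_K^{n+1},\ \rho_L^{n+1}\ri$, so that $\rho_\edge^{n+1}\leq\max(\rho_K^{n+1},\rho_L^{n+1})\leq M$; I would make this observation explicit before concluding. Everything else is a faithful copy of the density argument, and the passage from $M$ to $M^2$ is entirely accounted for by the density factor that the mass flux carries in the energy balance.
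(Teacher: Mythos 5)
Your proposal is correct and is exactly the argument the paper intends: the paper states Lemma~\ref{lmm:cons_e} without proof, leaving it as the verbatim adaptation of the proof of Lemma~\ref{lmm:cons_rho} with $e$ in place of $\rho$ and the mass flux $F_{K,\edge}^{n+1}$ in place of $|\edge|\,u_{K,\edge}^{n+1}$, the extra factor $M$ coming from the density in the flux. Your explicit remark that bounding $|F_{K,\edge}^{n+1}|\leq|\edge|\,M^2$ requires the face density to inherit the cell bound (via the upwind choice or $(H_\rho^{\rm imp})$, since $\rho_\edge^{n+1}\in\li\rho_K^{n+1},\rho_L^{n+1}\ri$) fills a hypothesis the paper leaves implicit, and is a welcome precision rather than a deviation.
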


\medskip
Combining the inequalities \eqref{eq:ent_mi_c} and \eqref{eq:ent_ei_c} and thanks to \eqref{prop-entropie},  Lemma \ref{lmm:cons_rho} and Lemma \ref{lmm:cons_e}, we obtain the following result.

\begin{theorem}[Global and weak local entropy inequalities, implicit reduced diffusion scheme]\label{thrm:impl_muscl}
Under assumptions $(H_\rho^{\rm imp})$ and $(H_e^{\rm imp})$, any solution of the scheme \eqref{eq:impl} satisfies, for any $K\in\mesh$ and $0 \leq n \leq N-1$:
\[
\frac{|K|}{\delta t} (\eta_K^{n+1}-\eta_K^n)
+ \sum_{\edge\in \edges(K)} |\edge|\ \eta_\edge^{n+1} u_{K,\edge}^{n+1} + |K|\ (\delta\!R_\eta)_K^{n+1} \leq 0,
\]
where the remainder term $\delta\!R_\eta=\delta\!R_m + \delta\!R_e$ is conservative, so integrating in space ({\it i.e.} summing over the cells) yields the following global entropy estimate, for $0 \leq n \leq N-1$:
\[
\sum_{K\in\mesh} |K|\ \eta_K^{n+1} \leq \sum_{K\in\mesh} |K|\ \eta_K^n.
\]
In addition, let us suppose that $\rho_K^n \leq M$, $1/ \rho_K^n \leq M$, $e_K^n \leq M$, $1/e_K^n \leq M$ and $|u_{K,\edge}| \leq M$ for $K \in \mesh$, $\edge \in \edges(K)$ and $0 \leq n \leq N$, and let us define the quantities $|\varphi_\rho'|_\infty=\max(|\varphi'_\rho(1/M)|,\ |\varphi'_\rho(M)|)$ and $|\varphi_e'|_\infty=\max(|\varphi'_e(1/M)|,\ |\varphi'_e(M)|)$.
Then the remainder term satisfies the following bound:
\[
\norm{\delta \! R_m}_{-1,1,\star}  \leq 3\ M\ \bigl(|\varphi_\rho'|_\infty\ \normxbv{\rho}+M\ |\varphi_e'|_\infty\ \normxbv{e})\ h_\mesh.
\]
\end{theorem}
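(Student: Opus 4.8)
The statement is, given Lemmas \ref{lmm:cons_rho} and \ref{lmm:cons_e}, essentially a matter of assembling the two weak balances already at hand, exactly as Theorem \ref{thrm:impl_upw} assembled \eqref{eq:ent_mi} and \eqref{eq:ent_ei}. The plan is to add \eqref{eq:ent_mi_c} and \eqref{eq:ent_ei_c} term by term. The two ``dilatation'' contributions, $\bigl[\rho_K^{n+1}\varphi_\rho'(\rho_K^{n+1})-\varphi_\rho(\rho_K^{n+1})\bigr]\sum_{\edge\in\edges(K)}|\edge|\,u_{K,\edge}^{n+1}$ from the mass inequality and $\varphi_e'(e_K^{n+1})\,p_K^{n+1}\sum_{\edge\in\edges(K)}|\edge|\,u_{K,\edge}^{n+1}$ from the energy inequality, carry the common factor $\sum_{\edge\in\edges(K)}|\edge|\,u_{K,\edge}^{n+1}$; invoking the discrete equation of state \eqref{eq:etat_i}, the pointwise identity \eqref{prop-entropie} holds with $\rho=\rho_K^{n+1}$, $e=e_K^{n+1}$, $p=p_K^{n+1}$, so these two terms cancel. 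The time-difference terms combine into $\frac{|K|}{\delta t}(\eta_K^{n+1}-\eta_K^n)$ by the very definition of $\eta_K^m$, and the face terms, using $F_{K,\edge}^{n+1}=|\edge|\,\rho_\edge^{n+1}u_{K,\edge}^{n+1}$, combine into $\sum_{\edge\in\edges(K)}|\edge|\,\eta_\edge^{n+1}u_{K,\edge}^{n+1}$ with $\eta_\edge^{n+1}=\varphi_\rho(\rho_\edge^{n+1})+\rho_\edge^{n+1}\varphi_e(e_\edge^{n+1})$. Setting $\delta\!R_\eta=\delta\!R_m+\delta\!R_e$ then yields the local inequality.

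For the global estimate I would sum the local inequality over $K\in\mesh$. The key observation is conservativity: on an interior face $\edge=K|L$ one has $u_{K,\edge}^{n+1}=-u_{L,\edge}^{n+1}$ and $F_{K,\edge}^{n+1}=-F_{L,\edge}^{n+1}$, while $u_{K,\edge}^{n+1}=0$ on boundary faces; moreover $\eta_\edge^{n+1}$ and the face quantities $(\delta\varphi_\rho)_\edge^{n+1}$, $(\delta\varphi_e)_\edge^{n+1}$ of \eqref{eq:def_drho}, \eqref{eq:def_de} depend only on $\edge$ and not on the cell from which it is viewed. Hence each interior face contributes opposite amounts to its two adjacent cells, so $\sum_{K\in\mesh}\sum_{\edge\in\edges(K)}|\edge|\,\eta_\edge^{n+1}u_{K,\edge}^{n+1}=0$ and likewise $\sum_{K\in\mesh}|K|\,(\delta\!R_\eta)_K^{n+1}=0$. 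Only the time-difference term survives, giving $\sum_{K\in\mesh}|K|\,\eta_K^{n+1}\leq\sum_{K\in\mesh}|K|\,\eta_K^n$.

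Finally, for the weak-norm bound I would use that, for each fixed test function $\psi$, the bracketed quantity defining $\norm{\cdot}_{-1,1}$ is linear in the family, so that norm is subadditive: $\norm{\delta\!R_\eta}_{-1,1}\leq\norm{\delta\!R_m}_{-1,1}+\norm{\delta\!R_e}_{-1,1}$. Substituting the two estimates of Lemmas \ref{lmm:cons_rho} and \ref{lmm:cons_e} produces the announced bound $3\,M\,\bigl(|\varphi_\rho'|_\infty\,\normxbv{\rho}+M\,|\varphi_e'|_\infty\,\normxbv{e}\bigr)\,h_\mesh$. The genuinely substantive work---the splitting of each face remainder into a sign-definite part, controlled under $(H_\rho^{\rm imp})$ and $(H_e^{\rm imp})$, plus a conservative part bounded in the weak norm---has already been carried out in those lemmas; within the present proof the only point demanding care is the bookkeeping above, in particular checking that the face quantities are single-valued per face, since this is the exact cancellation that simultaneously delivers the vanishing dilatation terms, the global estimate, and the applicability of the $W^{-1,1}$ bounds. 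This is where I would expect whatever small obstacle there is to lie, the remainder of the argument being routine.
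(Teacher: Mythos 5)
Your proposal is correct and follows essentially the same route as the paper, whose proof is precisely the one-line combination of \eqref{eq:ent_mi_c} and \eqref{eq:ent_ei_c} via \eqref{prop-entropie} (with the equation of state \eqref{eq:etat_i}), together with conservativity of the face quantities for the global estimate and Lemmas \ref{lmm:cons_rho} and \ref{lmm:cons_e} for the weak-norm bound. Your bookkeeping — single-valuedness of $\eta_\edge^{n+1}$, $(\delta\varphi_\rho)_\edge^{n+1}$ and $(\delta\varphi_e)_\edge^{n+1}$ per face, antisymmetry of $u_{K,\edge}^{n+1}$ and $F_{K,\edge}^{n+1}$, and subadditivity of the $\norm{\cdot}_{-1,1}$ norm — is exactly what the paper's terse argument relies on.
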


The entropy inequality is thus satisfied in the global sense and in the local weak sense for the proposed reduced diffusion adaptation of the implicit upwind scheme.
As mentioned in the introduction of this section, conditions $(H_\rho^{\rm imp})$ and $(H_e^{\rm imp})$ may be seen as an additional constraint to be added to the limitation of a MUSCL-like procedure (see also the conclusion of the last section of this paper).
%
%--------------------------------------------------------------------------------------------------------------------------
%
\section{Explicit schemes} \label{sec:explicit}

The general form of the discrete analogue of System \eqref{eq:cont} for an explicit scheme reads:
\begin{subequations}\label{eq:expl}
\begin{align}
\nonumber
& \mbox{For } K \in \mesh,\ 0 \leq n \leq N-1, 
\\[1ex] 
\label{eq:mass_e} &
\frac{|K|}{\delta t} (\rho_K^{n+1}-\rho_K^n) + \sum_{\edge\in \edges(K)} F_{K,\edge}^n = 0,
\\[1ex] 
\label{eq:e_int_e} &
\frac{|K|}{\delta t} (\rho_K^{n+1}e_K^{n+1}-\rho_K^n e_K^n) + \sum_{\edge\in \edges(K)} F_{K,\edge}^n e_\edge^n
+ p_K^n \sum_{\edge\in \edges(K)} |\edge|\,u_{K,\edge}^n \geq 0,
\\[1ex] \label{eq:etat_e} &
p_K^n=(\gamma-1)\, \rho_K^n\, e_K^n,
\end{align}\end{subequations}
where the numerical mass flux $F_{K,\edge}^n$ is still defined by \eqref{mass-flux}.
%
% ----------------------------------------------------
%
\subsection{Discrete renormalized forms of the mass balance equation}\label{subsec:rho_exp}

The aim of this section is to derive a discrete analogue of Relation \eqref{eq:ent_m}.
Let $\varphi$ be a twice continuously differentiable convex function from $(0,+\infty)$ to $\xR$, and, mimicking the formal computation performed at the continuous level,  let us multiply \eqref{eq:mass_e} by $\varphi'(\rho_K^{n+1})$.
We get:
\[
\varphi'(\rho_K^{n+1}) \Bigl[\frac{|K|}{\delta t} (\rho_K^{n+1}-\rho_K^n)+ \sum_{\edge\in \edges(K)} F_{K,\edge}^n \Bigr] =
(T_1)_K^{n+1}+(T_2)_K^{n+1}+ |K|\,R_K^{n+1}=0,
\]
with
\begin{equation}\label{eq:def_ren}
\begin{array}{l}\displaystyle
(T_1)_K^{n+1}=\varphi'(\rho_K^{n+1})\ \Bigl[\frac{|K|}{\delta t} (\rho_K^{n+1}-\rho_K^n)\Bigr],
\\[3ex] \displaystyle
(T_2)_K^{n+1}= \varphi'(\rho_K^n)\ \bigl[\sum_{\edge\in \edges(K)} F_{K,\edge}^n\bigr],
\\[4ex] \displaystyle
|K|\,R_K^{n+1}=\bigl(\varphi'(\rho_K^{n+1})-\varphi'(\rho_K^n)\bigr)\ \bigl[\sum_{\edge\in \edges(K)} F_{K,\edge}^n\bigr].
\end{array}
\end{equation}
By a Taylor expansion, we obtain for the first term that there exists $\rho_K^{n+1/2} \in \li\rho_K^n,\ \rho_K^{n+1}\ri$ such that:
\begin{multline}\label{eq:R_1}
(T_1)_K^{n+1}= \frac{|K|}{\delta t} \bigl[\varphi(\rho_K^{n+1})-\varphi(\rho_K^n)\bigr]+|K|\,(R_1)_K^{n+1},\quad
\\ \mbox{with } (R_1)_K^{n+1}= \frac 1 {2 \delta t} \,\varphi''(\rho_K^{n+1/2})\,(\rho_K^{n+1}-\rho_K^n)^2 \geq 0.
\end{multline}
The term $(T_2)_K^{n+1}$ reads:
\[
(T_2)_K^{n+1} = \sum_{\edge\in \edges(K)} |\edge|\, \varphi(\rho_\edge^n) u_{K,\edge}^n
+ \bigl(\varphi'(\rho_K^n) \rho_K^n - \varphi(\rho_K^n)\bigr)\ \bigl[\!\sum_{\edge\in \edges(K)} \! \! |\edge|  u_{K,\edge}^n\bigr]
+ |K|\,(R_2)_K^{n+1},
\]
with
\[
|K|\,(R_2)_K^{n+1} =  \sum_{\edge\in \edges(K)} |\edge| 
\Bigl[\varphi(\rho_K^n) + \varphi'(\rho_K^n) (\rho_\edge^n - \rho_K^n) - \varphi(\rho_\edge^n) \Bigr] u_{K,\edge}^n.
\]
As for the implicit case, let us define $\rho_{KL}^n$ the real number defined by Equation \eqref{eq:conv_int} (and denoted in this relation by $x_{KL}$) with $x_K=\rho^n_K$ and $x_L=\rho^n_L$ and let us assume that, for $\edge \in \edgesint$, $\edge=K|L$ and for $0 \leq n \leq N-1$:
\begin{equation}\label{eq:H_rho^exp}
(H_\rho^{\rm exp}) \qquad \rho_\edge^n \in \li \rho_K^n,\ \rho_{KL}^n \ri \mbox{ if } u_{K,\edge}^n \geq 0,
\ \rho_\edge^n \in \li \rho_L^n,\ \rho_{KL}^n\ri \mbox{ otherwise.}
\end{equation}
Then, under assumption $(H_\rho^{\rm exp})$, the remainder $R_2$ is a sum of a non-negative part and a term tending to zero, as stated in the following lemma.
The proof of this result is very close to the implicit case and is not reproduced here (indeed, up to a change of time exponents at the right-hand side from $n$ to $n+1$, the expression of $(R_2)_K^{n+1}$ is the same that the expression \eqref{eq:R_mass} of $(R_m)_K^{n+1}$, and the computation from Relation \eqref{eq:def_drho} up to the end of the proof of Lemma \ref{lmm:cons_rho} may be reproduced, still with the same change of time exponents).

\begin{lemma}
Let $M > 1$  and let us suppose that $\rho_K^n \leq M$, $1/ \rho_K^n \leq M$ and $|u_{K,\edge}| \leq M$, for $K \in \mesh$, $\edge \in \edges(K)$ and $0 \leq n \leq N$.
Let us define $|\varphi'|_\infty=\max(|\varphi'(1/M)|,\ |\varphi'(M)|)$.
Then there exists $\delta\! R_2$ such that:
\[
R_2 \geq \delta\! R_2 \mbox{ and }
\norm{\delta\! R_2}_{-1,1,\star}  \leq 3M\ |\varphi'|_\infty\ \normxbv{\rho}\ h_\mesh.
\]
\end{lemma}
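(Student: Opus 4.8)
The plan is to mirror the implicit computation of Lemma~\ref{lmm:cons_rho}, exploiting the observation already recorded above: the face contribution of $(R_2)_K^{n+1}$,
\[
\sum_{\edge\in\edges(K)} |\edge|\,\bigl[\varphi(\rho_K^n) - \varphi(\rho_\edge^n) + \varphi'(\rho_K^n)(\rho_\edge^n-\rho_K^n)\bigr]\,u_{K,\edge}^n,
\]
is obtained from the expression \eqref{eq:R_mass} of $(R_m)_K^{n+1}$ simply by lowering each superscript $n+1$ attached to $\rho$, $\rho_\edge$ and $u$ to $n$, and by allowing a general convex $\varphi$ in place of $\varphi_\rho$. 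I would therefore reuse the algebra running from \eqref{eq:def_drho} to the end of the proof of Lemma~\ref{lmm:cons_rho}, carrying this shift of time level throughout.

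Concretely, with $\rho_{KL}^n$ defined through \eqref{eq:conv_int} as in $(H_\rho^{\rm exp})$, I would introduce the per-face quantity
\[
(\delta\varphi)_\edge^n = \varphi(\rho_K^n) - \varphi(\rho_\edge^n) + \varphi'(\rho_K^n)\,\bigl[\rho_{KL}^n-\rho_K^n\bigr]
+ \tfrac12\bigl[\varphi'(\rho_K^n)+\varphi'(\rho_L^n)\bigr]\bigl[\rho_\edge^n-\rho_{KL}^n\bigr],
\]
which, by the defining property of $\rho_{KL}^n$, depends on $\edge$ only, and split the face contribution as $|\edge|\,(\delta\varphi)_\edge^n\,u_{K,\edge}^n$ plus a residual $|\edge|\,\tfrac12[\varphi'(\rho_K^n)-\varphi'(\rho_L^n)](\rho_\edge^n-\rho_{KL}^n)\,u_{K,\edge}^n$. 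Setting $|K|\,(\delta\!R_2)_K^{n+1} = \sum_{\edge\in\edges(K)} |\edge|\,(\delta\varphi)_\edge^n\,u_{K,\edge}^n$, the inequality $R_2 \geq \delta\!R_2$ reduces to the non-negativity of this residual. This is the one genuinely structural step: under $(H_\rho^{\rm exp})$ the relative position of $\rho_\edge^n$ with respect to $\rho_{KL}^n$ is tied to the sign of $u_{K,\edge}^n$, and combined with the monotonicity of $\varphi'$ (convexity of $\varphi$) this makes the product $[\varphi'(\rho_K^n)-\varphi'(\rho_L^n)](\rho_\edge^n-\rho_{KL}^n)\,u_{K,\edge}^n$ non-negative in every case, exactly as for $(F^R_m)_{K,\edge}^{n+1}$ in the implicit analysis.

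It then remains to bound the conservative part $\delta\!R_2$ in the weak norm. Since both $\rho_\edge^n$ and $\rho_{KL}^n$ lie in $\li \rho_K^n,\ \rho_L^n\ri$, convexity of $\varphi$ yields the pointwise estimate $|(\delta\varphi)_\edge^n| \leq 3\,\max(|\varphi'(\rho_K^n)|,|\varphi'(\rho_L^n)|)\,|\rho_K^n-\rho_L^n| \leq 3\,|\varphi'|_\infty\,|\rho_K^n-\rho_L^n|$, the last step using $1/M \leq \rho_K^n \leq M$. Testing $\delta\!R_2$ against $\psi\in\xC^\infty_c$ and invoking conservativity, the cell sum reorganizes into a face sum carrying the increment $(\psi_K^{n+1}-\psi_L^{n+1})$, controlled by $h_\mesh\,\sup\norm{\gradi\psi}$; combining this with the pointwise bound and $|u_{K,\edge}^n|\leq M$ produces $3\,M\,|\varphi'|_\infty\,\normxbv{\rho}\,h_\mesh$, as claimed.

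The main difficulty I anticipate is not any individual estimate---each is routine once the implicit template is in place---but the time-level bookkeeping: the remainder $(\delta\!R_2)_K^{n+1}$ is built from quantities at time $n$ while it is indexed at $n+1$, so the discrete pairing with the test function occurs at shifted indices. This is precisely what the starred norm $\norm{\cdot}_{-1,1,\star}$ is meant to encode, and I would take care that the summation by parts in space is performed consistently with its definition so that the BV control of $\rho$ enters at the correct time slices.
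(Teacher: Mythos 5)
Your proposal is correct and follows exactly the route the paper intends: the paper omits the proof precisely because, as it notes, the expression of $(R_2)_K^{n+1}$ coincides with that of $(R_m)_K^{n+1}$ in \eqref{eq:R_mass} up to lowering the time exponents from $n+1$ to $n$, so the computation from \eqref{eq:def_drho} through the proof of Lemma \ref{lmm:cons_rho} (splitting off the conservative part $(\delta\varphi)_\edge^n$, non-negativity of the residual under $(H_\rho^{\rm exp})$ by monotonicity of $\varphi'$, then the convexity bound and summation by parts against the test function) carries over verbatim, which is exactly what you reproduce.
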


\medskip
Finally, let $\underline h_\mesh$ be defined by:
\begin{equation}\label{eq:def_h_bar}
\underline h_\mesh = \min_{K \in \mesh} \frac{|K|}{\displaystyle \sum_{\edge \in \edges(K)} |\edge|}.
\end{equation}
Then the term $R$ defined by Equation \eqref{eq:def_ren} satisfies the estimate stated in Lemma \ref{lem:remainder-ex} below, which uses the time BV semi-norm.

\begin{definition}[Discrete time BV semi-norm]\label{def:time-BV}
For a family $(z_K^n)_{K\in\mesh, 0 \leq n \leq N}\subset \xR$,  the time BV semi norm is defined by 
\begin{equation}
\normtbv{z}= \sum_{n=0}^N\ \sum_{K \in \mesh} |K|\ |z^{n+1}_K-z^n_K|,
\end{equation}
\end{definition}

\begin{lemma} \label{lem:remainder-ex}
Let $M > 1$ and let us suppose that $\rho_K^n \leq M$, $1/ \rho_K^n \leq M$ and $|u_{K,\edge}| \leq M$, for $K \in \mesh$, $\edge \in \edges(K)$ and $0 \leq n \leq N$.
Let us denote by $|\varphi''|_\infty$ the maximum value taken by $\varphi''$ on the interval $[1/M,\ M]$.
The remainder term $R$ defined in \eqref{eq:def_ren} satisfies:
\[
\norm{R}_{L^1} =\sum_{n=0}^{N-1} \delta t \sum_{K\in\mesh} |K|\, R_K^n  \leq M^2 \ |\varphi''|_\infty\ \normtbv{\rho}\ \frac{\delta t}{\underline h_\mesh}.
\]
\end{lemma}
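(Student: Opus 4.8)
The plan is to bound the remainder directly from its definition \eqref{eq:def_ren}, writing
\[
|K|\,R_K^{n+1}=\bigl(\varphi'(\rho_K^{n+1})-\varphi'(\rho_K^n)\bigr)\ \Bigl[\sum_{\edge\in \edges(K)} F_{K,\edge}^n\Bigr],
\]
and controlling the two factors separately. The first factor is a time increment of $\varphi'(\rho)$, which should produce the time-BV flavour of the estimate, while the second factor is the net mass flux out of the cell, which should produce the geometric factor $\sum_{\edge\in\edges(K)}|\edge|$ and hence, after invoking \eqref{eq:def_h_bar}, the $1/\underline h_\mesh$ scaling.

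First I would apply the mean value theorem to the first factor. Since $\rho_K^n$ and $\rho_K^{n+1}$ both lie in $[1/M,\,M]$ under the standing assumptions, there is a point $\xi\in\li\rho_K^n,\rho_K^{n+1}\ri\subset[1/M,M]$ with $\varphi'(\rho_K^{n+1})-\varphi'(\rho_K^n)=\varphi''(\xi)\,(\rho_K^{n+1}-\rho_K^n)$, so that $\bigl|\varphi'(\rho_K^{n+1})-\varphi'(\rho_K^n)\bigr|\leq |\varphi''|_\infty\,\bigl|\rho_K^{n+1}-\rho_K^n\bigr|$. For the second factor I would bound the mass flux pointwise: by \eqref{mass-flux} and the hypotheses $\rho_\edge^n\leq M$ and $|u_{K,\edge}^n|\leq M$, one has $|F_{K,\edge}^n|\leq M^2\,|\edge|$, hence $\bigl|\sum_{\edge\in\edges(K)}F_{K,\edge}^n\bigr|\leq M^2\sum_{\edge\in\edges(K)}|\edge|$. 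Combining the two bounds and summing over the cells and time levels with weight $\delta t$ gives
\[
\sum_{n=0}^{N-1}\delta t\sum_{K\in\mesh}|K|\,\bigl|R_K^{n+1}\bigr|
\leq M^2\,|\varphi''|_\infty\ \sum_{n=0}^{N-1}\delta t\sum_{K\in\mesh}\bigl|\rho_K^{n+1}-\rho_K^n\bigr|\sum_{\edge\in\edges(K)}|\edge|.
\]
It then remains to use the definition \eqref{eq:def_h_bar} of $\underline h_\mesh$, which yields $\sum_{\edge\in\edges(K)}|\edge|\leq |K|/\underline h_\mesh$ for every $K$; pulling $\delta t/\underline h_\mesh$ out of the sum and recognising the time-BV semi-norm $\normtbv{\rho}=\sum_{n}\sum_K|K|\,|\rho_K^{n+1}-\rho_K^n|$ of Definition \ref{def:time-BV} closes the estimate.

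The computation is elementary once the two factors are separated, so there is no serious analytical obstacle; the only point requiring a little care is the bound $\rho_\edge^n\leq M$ on the face density, which is not part of the stated cell hypotheses but follows from them through the scheme: for the upwind choice $\rho_\edge^n$ equals one of $\rho_K^n,\rho_L^n$, and under the reduced-diffusion assumption $(H_\rho^{\rm exp})$ it lies in $\li\rho_K^n,\rho_{KL}^n\ri\subset\li\rho_K^n,\rho_L^n\ri$, so in either case $\rho_\edge^n\in[1/M,M]$. Conceptually, the essential feature is that, unlike the implicit case, this remainder cannot be rewritten in conservative form, and the price is the appearance of the CFL-like ratio $\delta t/\underline h_\mesh$: the bound forces $R$ to zero only if the time step is small compared to the mesh size, which is exactly the restriction announced in the abstract for the explicit scheme.
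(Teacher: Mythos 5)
Your proof is correct and follows essentially the same route as the paper: the mean value theorem applied to the time increment $\varphi'(\rho_K^{n+1})-\varphi'(\rho_K^n)$, the pointwise bound $|F_{K,\edge}^n|\leq M^2|\edge|$, and the geometric inequality $\sum_{\edge\in\edges(K)}|\edge|\leq |K|/\underline h_\mesh$ from \eqref{eq:def_h_bar}, yielding the time-BV factor. Your extra justification that $\rho_\edge^n\in[1/M,M]$ (via upwinding or $(H_\rho^{\rm exp})$) is a point the paper leaves implicit, and is a welcome clarification rather than a deviation.
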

\begin{proof}
For $K \in \mesh$ and $0 \leq n \leq N$, we get:
\begin{align}\label{eq:def_R}
|K|\, R_K^{n+1} &=\bigl(\varphi'(\rho_K^{n+1})-\varphi'(\rho_K^n)\bigr)\ \bigl[\sum_{\edge\in \edges(K)} F_{K,\edge}^n\bigr]
\\ &= \varphi''(\tilde \rho_K^{n+1/2}) \bigl(\rho_K^{n+1}-\rho_K^n\bigr)\ \bigl[\sum_{\edge\in \edges(K)} |\edge|\, \rho_\edge^n u_{K,\edge}^n\bigr],
\end{align}
where $\tilde \rho_K^{n+1/2} \in \li \rho_K^n, \rho_K^{n+1}\ri$.
Thus we have:
\[
\norm{R}_{L^1}=\sum_{n=0}^{N-1} \delta t \sum_{K\in\mesh} |K|\, R_K^n \leq
|\varphi''|_\infty\, M^2\ \sum_{n=0}^{N-1} \delta t \Bigl(\sum_{K\in\mesh} |\edge|\Bigr)\ |\rho_K^{n+1}-\rho_K^n|,
\]
which yields the result.
\end{proof}

Let us now suppose that the discretization of the density at the face $ \rho_\edge^n$ is upwind.
Then $R_2$ satisfies:
\begin{equation}\label{eq:R_2_upwind}
|K|\,(R_2)_K^{n+1} =  \sum_{\edge=K|L} \frac 1 2\ |\edge|\ \varphi''(\rho_{K,\edge}^n)\ \bigl(\rho_K^n - \rho_L^n \bigr)^2 (u_{K,\edge}^n)^-,
\end{equation}
where $\rho_{K,\edge}^n \in \li \rho_K^n,\ \rho_L^n \ri$.
Therefore, $R_2$ is non-negative.
Starting from Equation \eqref{eq:def_R}, we may now reformulate the remainder term $R_K^{n+1}$ as $R_K^{n+1}=(R_{01})_K^{n+1} + (R_{02})_K^{n+1}$ with:
\begin{equation}
\begin{array}{l}\displaystyle
|K|\,(R_{01})_K^{n+1}= \varphi''(\tilde \rho_K^{n+1/2}) \bigl(\rho_K^{n+1}-\rho_K^n\bigr)\ \rho_K^n \bigl[\sum_{\edge\in \edges(K)} |\edge|\ u_{K,\edge}^n\bigr],
\\[3ex] \displaystyle
|K|\,(R_{02})_K^{n+1}=\varphi''(\tilde \rho_K^{n+1/2}) \bigl(\rho_K^{n+1}-\rho_K^n\bigr)
\ \bigl[\sum_{\edge\in \edges(K)} |\edge| (\rho_\edge^n-\rho_K^n) u_{K,\edge}^n\bigr].
\end{array}
\end{equation}
By Young's inequality, the second term may be estimated as follows:
\begin{multline*}
|K|\, |(R_{02})_K^{n+1}| \leq  \frac 1 2 \sum_{\edge\in \edges(K)}
|\edge|\ \varphi''(\rho_{K,\edge}^n)\ (u_{K,\edge}^n)^-\ \bigl(\rho_K^n-\rho_L^n\bigr)^2
\\+ \frac 1 2 \sum_{\edge\in \edges(K)}
|\edge|\ \frac{\varphi''(\tilde \rho_K^{n+1/2})^2}{\varphi''(\rho_{K,\edge}^n)}\ (u_{K,\edge}^n)^- \bigl(\rho_K^{n+1}-\rho_K^n\bigr)^2.
\end{multline*}
Therefore, in view of the expressions \eqref{eq:R_1} and \eqref{eq:R_2_upwind} of $(R_1)_K^n$ and $(R_2)_K^n$ respectively, we get $(R_1)_K^{n+1}+(R_2)_K^{n+1}+ (R_{02})_K^{n+1} \geq 0$ under the CFL condition:
\begin{equation}\label{eq:cfl_rho}
\delta t \leq \frac{|K|}{\displaystyle
\sum_{\edge\in \edges(K)} \frac{\varphi''(\tilde \rho_K^{n+1/2})^2}{\varphi''(\rho_{K,\edge}^n)}\ |\edge|\ (u_{K,\edge}^n)^-}.
\end{equation}
To proceed, we now need to suppose that the normal face velocities $u_{K,\edge}$ are computed from a discrete velocity field $\bfu$ by the formula $u_{K,\edge}=\bfu_\edge \cdot \bfn_{K,\edge}$, where $\bfn_{K,\edge}$ is the unit normal vector to $\edge$ outward $K$ and $\bfu_\edge$ is an approximation of the velocity at the face, which may be the discrete unknown itself (in the case of a staggered discretization) or an interpolation (for instance, for a colocated arrangement of the unknowns). 
For $1 \leq q$, let us now define the following discrete norm for a velocity field $\bfu$:
\[
\norm{\bfu}_{L^q(0,T;W^{1,q}_\mesh)}^q = \sum_{i=1}^d\sum_{n=0}^N \delta t \sum_{K\in\mesh}
\ \sum_{(\edge,\edge')\in \edges(K)^2} |K| \left(\frac{u_{\edge,i}^n-u_{\edge',i}^n}{h_K} \right)^q.
\]
It is reasonable to suppose that, under regularity assumptions of the mesh which need to be made precise in view of the space approximation at hand, this norm is equivalent to the standard finite-volume discrete $L^q(0,T; W^{1,q})$ norm \cite{eym-00-fin}; it is indeed true for usual cells (in particular, with a bounded number of faces)  for staggered discretizations and for a convex interpolation of the velocity at the faces for colocated schemes.
Let $C_\mesh$ be the following parameter, measuring the regularity of the mesh:
\begin{equation}\label{eq:def_Cm}
C_\mesh = \max_{K\in \mesh,\ (\edge,\edge')\in \edges(K)^2}\ \frac{(|\edge|+|\edge'|)\,h_K}{|K|}.
\end{equation}
With these notations, we are in position to state the following estimate for $R_{01}$.

\begin{lemma}
Let $M > 1$ and let us suppose that $\rho_K^n \leq M$ and $1/ \rho_K^n \leq M$, for $K \in \mesh$ and $0 \leq n \leq N$.
Let us denote by $|\varphi''|_\infty$ the maximum value taken by $\varphi''$ on the interval $[1/M,\ M]$.
Then the remainder term $R_{01}$ satisfies the following estimate:
\[
\norm{R_{01}}_{L^1} \leq C\ C_\mesh\ M^{(2p-1)/p}\ |\varphi''|_\infty\ \normtbv{\rho}^{1/p}\ \norm{\bfu}_{L^{q}(0,T;W^{1,q}_\mesh)}\ \delta t^{1/p},
\]
where $p \geq 1$, $q\geq 1$ and $\dfrac 1 p + \dfrac 1 {q}=1$ and the positive real number $C$ only depends on the maximal number of faces of the mesh cells.
\end{lemma}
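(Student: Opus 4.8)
The plan is to start from the very definition of the $L^1$ norm used in Lemma \ref{lem:remainder-ex}, namely $\norm{R_{01}}_{L^1}=\sum_{n=0}^{N-1}\delta t\sum_{K\in\mesh}|K|\,|(R_{01})_K^{n+1}|$, and to bound the two pointwise factors appearing in $|K|\,(R_{01})_K^{n+1}$ by their uniform bounds: $|\varphi''(\tilde\rho_K^{n+1/2})|\leq|\varphi''|_\infty$ since $\tilde\rho_K^{n+1/2}\in[1/M,M]$, and $\rho_K^n\leq M$. This reduces the task to estimating
\[
\sum_{n=0}^{N-1}\delta t\sum_{K\in\mesh}|\rho_K^{n+1}-\rho_K^n|\ \Bigl|\sum_{\edge\in\edges(K)}|\edge|\,u_{K,\edge}^n\Bigr|,
\]
that is, a coupling between a time increment of the density and a discrete divergence of the velocity, which must be split across the two target norms by a Hölder argument.

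The crucial step, and the one that actually makes the bound possible, is to rewrite the discrete divergence as a sum of velocity \emph{differences}. Writing $u_{K,\edge}^n=\bfu_\edge^n\cdot\bfn_{K,\edge}$ and invoking the geometric identity $\sum_{\edge\in\edges(K)}|\edge|\,\bfn_{K,\edge}=\boldsymbol 0$, valid for any control volume, I may subtract the value $\bfu_{\edge'}^n$ at any reference face $\edge'\in\edges(K)$ without altering the sum; bounding the resulting expression then gives
\[
\Bigl|\sum_{\edge\in\edges(K)}|\edge|\,u_{K,\edge}^n\Bigr|\leq\sum_{(\edge,\edge')\in\edges(K)^2}|\edge|\ |\bfu_\edge^n-\bfu_{\edge'}^n|.
\]
Without this identity one could only control the divergence by the velocity itself, which would be useless; here the face-to-face jumps $|\bfu_\edge^n-\bfu_{\edge'}^n|$ are precisely the quantities entering $\norm{\bfu}_{L^q(0,T;W^{1,q}_\mesh)}$.

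Next I would apply Hölder's inequality on the index set $(n,K,\edge,\edge')$ with conjugate exponents $p$ and $q$, arranging the factors so that the $q$-part reconstructs exactly the summand $|K|\,(|\bfu_\edge^n-\bfu_{\edge'}^n|/h_K)^q\,\delta t$ of the discrete $W^{1,q}$ norm, while the $p$-part collects the density increment, the remaining geometric weights, and a factor $\delta t^{1/p}$. The geometric weight left in the $p$-part is $|\edge|^p h_K^p/|K|^{p/q}$; using $p/q=p-1$ this equals $|K|\,(|\edge|\,h_K/|K|)^p$, which is bounded by $|K|\,C_\mesh^p$ directly from the definition \eqref{eq:def_Cm} of $C_\mesh$. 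The explicit $\delta t^{1/p}$ of the statement emerges from this same $p$-part, and the summation over the boundedly many pairs $(\edge,\edge')$ contributes only a constant depending on the maximal number of faces.

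The one genuinely delicate point—the main obstacle—is that Hölder naturally produces $|\rho_K^{n+1}-\rho_K^n|^p$, whereas the target $\normtbv{\rho}$ is \emph{linear} in the increments. I would resolve this with the $L^\infty$ control $\rho_K^m\in[1/M,M]$, which gives $|\rho_K^{n+1}-\rho_K^n|^{p-1}\leq(2M)^{p-1}$ and thus converts the $p$-th power back to the first power at the cost of the factor $(2M)^{p-1}$. Collecting the powers of $M$—one from the bound $\rho_K^n\leq M$ and $(p-1)/p$ from $(2M)^{(p-1)/p}$ after taking the $1/p$-th root—yields exactly $M^{1+(p-1)/p}=M^{(2p-1)/p}$, matching the claimed exponent, while $C_\mesh^{p\cdot 1/p}=C_\mesh$ and the residual numerical constants (the $2^{(p-1)/p}$, the face-count factor, and the dimensional constant from passing between the vector norm and the componentwise $W^{1,q}$ sum) are absorbed into $C$, which can then be taken to depend only on the maximal number of faces per cell and on the space dimension. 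This balancing of the powers of $M$ against the requirement of a linear dependence on $\normtbv{\rho}$ is the core subtlety; everything else is bookkeeping of the mesh-geometry weights through $C_\mesh$.
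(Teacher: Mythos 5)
Your proposal is correct and follows essentially the same route as the paper: the key identity $\sum_{\edge\in\edges(K)}|\edge|\,\bfn_{K,\edge}=0$ to convert the discrete divergence into face-to-face velocity differences, a H\"older split with exponents $p,q$ whose $q$-part reconstructs the discrete $W^{1,q}$ norm, the mesh weight absorbed via $C_\mesh$, and the bound $|\rho_K^{n+1}-\rho_K^n|^p\leq(2M)^{p-1}|\rho_K^{n+1}-\rho_K^n|$ to recover the linear $\normtbv{\rho}$ dependence. The only (immaterial) differences are that the paper subtracts the mean face velocity $\bfu_K^n$ rather than a reference face value, and places the geometric weight in the $q$-factor rather than the $p$-factor.
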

\begin{proof}
Since $\displaystyle \sum_{\edge\in \edges(K)} |\edge|\ \bfn_{K,\edge}=0$, we may write:
\[
|K|\,(R_{01})_K^{n+1}= \varphi''(\tilde \rho_K^{n+1/2}) \bigl(\rho_K^{n+1}-\rho_K^n\bigr)\ \rho_K^n
\bigl[\sum_{\edge\in \edges(K)} |\edge|\ (\bfu_\edge^n -\bfu_K^n) \cdot \bfn_{K,\edge}\bigr],
\]
where $\bfu_K^n$ stands for the mean value of the face velocities $(\bfu_\edge^n)_{\edge \in \edges(K)}$.
We now observe that:
\[
\Bigl| \sum_{\edge\in \edges(K)} |\edge|\ (\bfu_\edge^n -\bfu_K^n) \cdot \bfn_{K,\edge} \Bigr| \leq
2 \sum_{i=1}^d\ \sum_{(\edge,\edge') \in \edges(K)^2} (|\edge|+|\edge'|)\ |u^n_{\edge,i}-u^n_{\edge',i}|.
\]
Therefore, 
\[
|K|\,|(R_{01})_K^{n+1}| \leq 2\ |\varphi''|_\infty M\ \bigl|\rho_K^{n+1}-\rho_K^n\bigr|
\sum_{i=1}^d\ \sum_{(\edge,\edge') \in \edges(K)^2} (|\edge|+|\edge'|)\ |u^n_{\edge,i}-u^n_{\edge',i}|.
\]
We thus have, thanks to a H\"older estimate, for $p \geq 1$, $q \geq 1$ and $\dfrac 1 p + \dfrac 1 {q}=1$:
\begin{align*}
\norm{R_{01}}_{L^1} & = \sum_{n=0}^{N-1} \delta t \sum_{K\in\mesh} |K|\,(R_{01})_K^{n+1}
\\
& \leq  2\ |\varphi''|_\infty M\ \Bigl[ \delta t \sum_{n=0}^{N-1} \sum_{K\in\mesh} |K|\ \bigl|\rho_K^{n+1}-\rho_K^n\bigr|^p
\Bigl(\sum_{i=1}^d\ \sum_{(\edge,\edge') \in \edges(K)^2} 1\Bigr) \Bigr]^{1/p}
\\
& \Bigl[\sum_{n=0}^{N-1} \sum_{K\in\mesh} \sum_{i=1}^d \sum_{(\edge,\edge') \in \edges(K)^2}  
\delta t\ |K| \ \left(\frac{|u^n_{\edge,i}-u^n_{\edge',i}|}{h_K}\right)^{q} \left(\frac{(|\edge|+|\edge'|)\,h_K}{|K|}\right)^{q}
\Bigr]^{1/q}.
\end{align*}
Using $|\rho_K^{n+1}-\rho_K^n |^p \leq (2 M)^{p-1}\ |\rho_K^{n+1}-\rho_K^n |$ yields the result. 
\end{proof}

\medskip
Gathering the results of this section, we obtain the following proposition.

\begin{proposition}
Let $\varphi$ be a twice continuously differentiable convex function from $(0,+\infty)$ to $\xR$, and let $\rho$ satisfy \eqref{eq:mass_e}.
Let $M \geq 1$ and let us suppose that $\rho_K^n \leq M$, $1/ \rho_K^n \leq M$ and $|u_{K,\edge}| \leq M$, for $K \in \mesh$, $\edge \in \edges(K)$ and $0 \leq n \leq N$.
Let $|\varphi'|_\infty=\max(|\varphi'(1/M)|,\ |\varphi'(M)|)$ and $|\varphi''|_\infty$ be the maximum value taken by $\varphi''$ on the interval $[1/M,\ M]$.
Then the following inequality holds:
\begin{multline*}
\frac{|K|}{\delta t} \bigl[\varphi(\rho_K^{n+1})-\varphi(\rho_K^n)\bigr]
+ \sum_{\edge\in \edges(K)} |\edge| \varphi(\rho_\edge^n) u_{K,\edge}^n
\\+ \bigl(\varphi'(\rho_K^n) \rho_K^n - \varphi(\rho_K^n)\bigr)\ \bigl[\sum_{\edge\in \edges(K)} |\edge|\, u_{K,\edge}^n\bigr]
+ |K|\,(R_\rho)_K^{n+1} \leq 0,
\end{multline*}
where the remainder $(R_\rho)_K^{n+1}$ is defined as follows.
\begin{list}{-}{\itemsep=1ex \topsep=1ex \leftmargin=1.cm \labelwidth=0.3cm \labelsep=0.5cm \itemindent=0.cm}
\item If the discretization of the convection term in \eqref{eq:mass_e} satisfies the assumption $(H_\rho^{\rm exp})$, $R_\rho = R_{\rho,1}+R_{\rho,2}$ with:
\begin{align*}
&\norm{R_{\rho,1}}_{-1,1,\star}  \leq 3M\ |\varphi'|_\infty\ \normxbv{\rho}\ h_\mesh, \\
&\norm{R_{\rho,2}}_{L^1} \leq M^2\ |\varphi''|_\infty\ \normtbv{\rho}\ \frac{\delta t}{\underline h_\mesh},
\end{align*} 	
where $\underline h_\mesh$ is defined by \eqref{eq:def_h_bar}.
\item If the discretization of the convection term in \eqref{eq:mass_e} is upwind, under the CFL condition \eqref{eq:cfl_rho}, we also have (with a different expression for $R_\rho$):
\[
\norm{R_\rho}_{L^1} \leq C\ C_\mesh\ M^{(2p-1)/p}\ |\varphi''|_\infty\ \normtbv{\rho}^{1/p}\ \norm{\bfu}_{L^{q}(0,T;W^{1,q}_\mesh)}\ \delta t^{1/p},
\]
where $p \geq 1$, $q\geq 1$ and $\dfrac 1 p + \dfrac 1 {q}=1$, $C_\mesh$ is defined by \eqref{eq:def_Cm} and $C$ only depends on the maximal number of faces of the mesh cells.
\end{list}
\end{proposition}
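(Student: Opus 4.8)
The plan is to assemble, with no genuinely new computation, the decomposition already produced at the beginning of this section. Multiplying the explicit mass balance \eqref{eq:mass_e} by $\varphi'(\rho_K^{n+1})$ gives the identity $(T_1)_K^{n+1}+(T_2)_K^{n+1}+|K|\,R_K^{n+1}=0$, and the expansions recalled in \eqref{eq:R_1} and in the displayed expression of $(T_2)_K^{n+1}$ rewrite this as
\begin{multline*}
\frac{|K|}{\delta t}\bigl[\varphi(\rho_K^{n+1})-\varphi(\rho_K^n)\bigr]
+\sum_{\edge\in\edges(K)}|\edge|\,\varphi(\rho_\edge^n)\,u_{K,\edge}^n
+\bigl(\varphi'(\rho_K^n)\rho_K^n-\varphi(\rho_K^n)\bigr)\sum_{\edge\in\edges(K)}|\edge|\,u_{K,\edge}^n
\\
+|K|\bigl[(R_1)_K^{n+1}+(R_2)_K^{n+1}+R_K^{n+1}\bigr]=0.
\end{multline*}
Everything then reduces to extracting from $(R_1)_K^{n+1}+(R_2)_K^{n+1}+R_K^{n+1}$ a non-negative part, which may be discarded to turn the equality into the announced inequality, and a surviving remainder $(R_\rho)_K^{n+1}$ whose norm is already under control. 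In both cases I would use that $(R_1)_K^{n+1}\geq 0$, as guaranteed by \eqref{eq:R_1} and the convexity of $\varphi$.

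In the first case, under assumption $(H_\rho^{\rm exp})$, I would invoke the lemma established just after \eqref{eq:H_rho^exp}, which writes $(R_2)_K^{n+1}$ as a non-negative part plus a term $\delta\!R_2$ controlled in the weak norm by $3M\,|\varphi'|_\infty\,\normxbv{\rho}\,h_\mesh$. Adding $|K|(\delta\!R_2+R)_K^{n+1}$ to both sides of the identity, the left-hand side becomes $-|K|\bigl[(R_1)_K^{n+1}+((R_2)_K^{n+1}-\delta\!R_2)\bigr]\leq 0$, which is the claimed inequality with $R_{\rho,1}=\delta\!R_2$ and $R_{\rho,2}=R$; the $L^1$ bound on $R_{\rho,2}=R$ is precisely Lemma \ref{lem:remainder-ex}.

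In the second (upwind) case, the density at the face is upwind, so $(R_2)_K^{n+1}$ is given explicitly by \eqref{eq:R_2_upwind} and is non-negative. Here I would use the reformulation $R_K^{n+1}=(R_{01})_K^{n+1}+(R_{02})_K^{n+1}$ together with the Young inequality already displayed, which shows that $(R_1)_K^{n+1}+(R_2)_K^{n+1}+(R_{02})_K^{n+1}\geq 0$ as soon as the CFL condition \eqref{eq:cfl_rho} holds. Discarding this non-negative combination leaves $(R_\rho)_K^{n+1}=(R_{01})_K^{n+1}$, whose $L^1$ estimate is exactly the last lemma of the section.

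The proof carries essentially no analytic difficulty beyond the lemmas already proved; the only point requiring care is the bookkeeping of signs, namely identifying in each case precisely which combination of $R_1$, $R_2$, $R_{01}$ and $R_{02}$ is non-negative, and hence droppable, and which term persists as the controlled remainder. In particular, in the upwind case the whole role of the CFL condition \eqref{eq:cfl_rho} is to absorb the sign-indefinite cross term $R_{02}$ into the non-negative numerical diffusion furnished by $R_1$ and $R_2$, and this is the single place where the ratio $\delta t/\underline h_\mesh$ enters the argument.
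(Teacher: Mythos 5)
Your proposal is correct and follows exactly the route the paper intends: the paper's ``proof'' is precisely the gathering of the identity $(T_1)+(T_2)+|K|R=0$ with the expansions \eqref{eq:R_1} and of $(T_2)$, then in Case~1 dropping the non-negative part $R_1+(R_2-\delta\!R_2)$ to leave $R_{\rho,1}=\delta\!R_2$, $R_{\rho,2}=R$ (bounded by the lemma after $(H_\rho^{\rm exp})$ and Lemma \ref{lem:remainder-ex}), and in Case~2 using \eqref{eq:R_2_upwind}, the splitting $R=R_{01}+R_{02}$ and Young's inequality under the CFL condition \eqref{eq:cfl_rho} to leave $R_\rho=R_{01}$, bounded by the last lemma of the section. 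Your sign bookkeeping in both cases is accurate, so there is nothing to add.
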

%
% ----------------------------------------------------
%
\subsection{Inequalities derived from the internal energy balance}\label{subsec:e_exp}

Thanks to the discrete mass balance equation, for any scalar field $z$, a discrete analogue of the expression $\partial_t(\rho z) + \dive(\rho z \bfu)$ may be transformed to a discrete analogue of $\rho \partial_t z + \rho \bfu \cdot \gradi z$.
Of course, these two expressions are (formally) equal at the continuous level; in the fluid mechanics context, the first form is often referred to as the conservative form, while the second one is called the non-conservative form.
Since the second operator is a transport operator, known to preserve the minimum and maximum bounds of the solution, the transformation, in the discrete setting, from the conservative to the non-conservative formulation was first used to derive maximum preservation properties for the discretization \cite{lar-91-how}.
The second form also naturally allows to combine the derivatives as follows:
\[
\varphi'(z) \bigl[ \rho \partial_t z + \rho \bfu \cdot \gradi z \bigr] = \rho \partial_t \bigl(\varphi(z)\bigr)
+ \rho \bfu \cdot \gradi \bigl(\varphi(z)\bigr)
\]
so
\[
\varphi'(z) \bigl[ \partial_t(\rho z) + \dive(\rho z \bfu) \bigr] = \partial_t(\rho \varphi(z)) + \dive\bigl(\rho \varphi(z) \bfu \bigr),
\]
for any regular function of $\varphi$ from $\xR$ to $\xR$.
Note that this computation is closely linked to maximum principle properties (think of $\varphi(z)=(z^-)^2$ for instance).
The object of this section is to mimick this computation at the discrete level, so the first ingredient is the discrete identity corresponding to the equality of the conservative and non-conservative forms of the convection operator, which read:
\begin{multline}
\frac{|K|}{\delta t} (\rho_K^{n+1} z_K^{n+1}-\rho_K^n z_K^n) + \sum_{\edge\in \edges(K)} F_{K,\edge}^n z_\edge^n  =
\frac{|K|}{\delta t} \rho_K^{n+1} (z_K^{n+1}-z_K^n)\\ + \sum_{\edge\in \edges(K)} F_{K,\edge}^n (z_\edge^n-z_K^n),	 \; K \in \mesh, \; 0 \leq n \leq N-1,
\end{multline}\label{eq:cons_noncons}
for any discrete scalar function $z$.
Let now $\varphi$ be a twice continuously differentiable function of $(0,+\infty)$ to $\xR$, and let us multiply the first two terms of the discrete internal energy balance \eqref{eq:e_int_e} by $\varphi'(e_K^{n+1})$.
Switching from the conservative to the non conservative form, we get:
\begin{multline*} \hspace{3ex}
\varphi'(e_K^{n+1})\ \Bigl[\frac{|K|}{\delta t} (\rho_K^{n+1}e_K^{n+1}-\rho_K^n e_K^n) + \sum_{\edge\in \edges(K)} F_{K,\edge}^n e_\edge^n \Bigr]
\\
= \varphi'(e_K^{n+1})\ \Bigl[\frac{|K|}{\delta t} \rho_K^{n+1} (e_K^{n+1}-e_K^n) + \sum_{\edge\in \edges(K)} F_{K,\edge}^n (e_\edge^n-e_K^n) \Bigr].
\hspace{3ex} \end{multline*}
We now write:
\begin{multline*} \hspace{3ex}
\varphi'(e_K^{n+1})\ \Bigl[\frac{|K|}{\delta t} \rho_K^{n+1} (e_K^{n+1}- e_K^n) + \sum_{\edge\in \edges(K)} F_{K,\edge}^n (e_\edge^n-e_K^n) \Bigr]
\\
= \varphi'(e_K^{n+1})\ \frac{|K|}{\delta t} \rho_K^{n+1} (e_K^{n+1} - e_K^n)
+ \varphi'(e_K^n) \Bigl[ \sum_{\edge\in \edges(K)} F_{K,\edge}^n (e_\edge^n - e_K^n) \Bigr]
+ |K|\,R_K^{n+1},
\hspace{3ex} \end{multline*}
with:
\[
|K|\,R_K^{n+1} = \Bigl[ \varphi'(e_K^{n+1}) - \varphi'(e_K^n) \Bigr] \Bigl[ \sum_{\edge\in \edges(K)} F_{K,\edge}^n (e_\edge^n - e_K^n) \Bigr].
\]
As in the previous section, this remainder term satisfies the following estimate.

\begin{lemma}
Let $M \geq 1$ and let us suppose that $\rho_K^n \leq M$, $e_K^n<M$, $1/ e_K^n \leq M$ and $|u_{K,\edge}| \leq M$, for $K \in \mesh$, $\edge \in \edges(K)$ and $0 \leq n \leq N$.
Let us denote by $|\varphi''|_\infty$ the maximum value taken by $\varphi''$ on the interval $[1/M,\ M]$.
Then, the remainder term $R$ satisfies:
\[
\norm{R}_{L^1} \leq M^2\ |\varphi''|_\infty\ \normtbv{e}\ \frac{\delta t}{\underline h_\mesh},
\]
where $\underline h_\mesh$ is defined by \eqref{eq:def_h_bar}.
\end{lemma}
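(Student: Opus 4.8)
The plan is to reproduce, almost verbatim, the argument of Lemma \ref{lem:remainder-ex}, since the present remainder has exactly the same algebraic shape: a difference of $\varphi'$ taken at two consecutive time levels, multiplied by a flux-weighted sum. First I would apply the mean value theorem to the leading bracket, writing
\[
\varphi'(e_K^{n+1}) - \varphi'(e_K^n) = \varphi''(\tilde e_K^{n+1/2})\,(e_K^{n+1}-e_K^n),
\]
with $\tilde e_K^{n+1/2} \in \li e_K^n,\ e_K^{n+1} \ri$. Under the hypotheses $1/e_K^n \le M$ and $e_K^n < M$, the intermediate point lies in $[1/M,\ M]$, so $|\varphi''(\tilde e_K^{n+1/2})| \le |\varphi''|_\infty$. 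This isolates the time increment $(e_K^{n+1}-e_K^n)$, which is the quantity destined to be summed into the time BV semi-norm $\normtbv{e}$ of Definition \ref{def:time-BV}.

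Next I would expand the flux through $F_{K,\edge}^n = |\edge|\,\rho_\edge^n u_{K,\edge}^n$, so that
\[
|K|\,R_K^{n+1} = \varphi''(\tilde e_K^{n+1/2})\,(e_K^{n+1}-e_K^n) \sum_{\edge\in\edges(K)} |\edge|\,\rho_\edge^n\,u_{K,\edge}^n\,(e_\edge^n-e_K^n),
\]
and take absolute values. The face factors are each controlled in $L^\infty$: $|\rho_\edge^n| \le M$ and $|u_{K,\edge}^n| \le M$ hold by assumption, while the energy difference $|e_\edge^n - e_K^n|$ is bounded using the same range, the point being that the face value $e_\edge^n$ — whether chosen by upwinding or admissibly under a condition such as $(H_e^{\rm imp})$ — lies between the two neighbouring cell values and hence in $[1/M,\ M]$. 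Collecting these $L^\infty$ bounds together with $|\varphi''|_\infty$ produces the multiplicative constant of the statement, times $|e_K^{n+1}-e_K^n| \sum_{\edge\in\edges(K)} |\edge|$.

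It then remains to sum over cells and time levels. Invoking the definition \eqref{eq:def_h_bar} of $\underline h_\mesh$, which gives $\sum_{\edge\in\edges(K)} |\edge| \le |K|/\underline h_\mesh$, I would bound the per-cell quantity by $(|K|/\underline h_\mesh)\,|e_K^{n+1}-e_K^n|$; multiplying by $\delta t$ and summing over $K \in \mesh$ and $0 \le n \le N-1$ reconstructs $\normtbv{e}$ and yields the extra factor $\delta t/\underline h_\mesh$, hence the claimed bound. The only genuinely new ingredient compared with the mass-balance case of Lemma \ref{lem:remainder-ex} — and the step I would treat most carefully — is the additional spatial factor $(e_\edge^n-e_K^n)$ arising from the non-conservative rewriting: one must ensure it is absorbed through the $L^\infty$ control of the admissible face energy, rather than attempting to retain its BV structure, since it is the \emph{time} increment $(e_K^{n+1}-e_K^n)$, not this spatial increment, that the semi-norm $\normtbv{e}$ is meant to capture.
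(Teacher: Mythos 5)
Your route is the paper's route: the paper gives no separate proof of this lemma, saying only ``as in the previous section,'' i.e.\ repeat the argument of Lemma \ref{lem:remainder-ex} (mean value theorem applied to $\varphi'(e_K^{n+1})-\varphi'(e_K^n)$, $\xL^\infty$ bounds on the face factors, the geometric bound $\sum_{\edge\in\edges(K)}|\edge|\le |K|/\underline h_\mesh$ from \eqref{eq:def_h_bar}, then summation over cells and time levels to reconstruct $\normtbv{e}$), and that is exactly what you do. Your explicit remark that one must control the face values $e_\edge^n$ and $\rho_\edge^n$ through upwinding or an admissibility condition — the lemma's hypotheses only bound cell values — is correct and is left implicit in the paper; note only that the relevant condition in this explicit section is $(H_e^{\rm exp})$, not $(H_e^{\rm imp})$.

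There is, however, one genuine flaw: your claim that the collected $\xL^\infty$ bounds ``produce the multiplicative constant of the statement.'' They do not. Your estimate contains three independent factors of $M$, namely $|\rho_\edge^n|\le M$, $|u_{K,\edge}^n|\le M$ and $|e_\edge^n-e_K^n|\le M$, so what it actually yields is
\[
\norm{R}_{L^1} \le M^3\ |\varphi''|_\infty\ \normtbv{e}\ \frac{\delta t}{\underline h_\mesh},
\]
not the stated $M^2$ bound, and no rearrangement of the same estimates can do better, since each of the three factors can genuinely be of order $M$. The $M^2$ in the lemma appears to be carried over verbatim from Lemma \ref{lem:remainder-ex}, where the bracket $\sum_{\edge\in\edges(K)}F_{K,\edge}^n$ carries no factor $(e_\edge^n-e_K^n)$ and the product $\rho_\edge^n u_{K,\edge}^n$ honestly gives $M^2$; here the extra spatial increment costs an additional power of $M$. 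This is immaterial for the intended use of the lemma (the remainder still vanishes as $\delta t/\underline h_\mesh\to 0$ under the $\xL^\infty$ and BV controls), but a correct write-up must either state the bound with $M^3$ or flag the discrepancy with the statement, rather than assert that the constants agree.
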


\medskip
Let us now introduce two additional remainder terms as follows:
\begin{multline}
\varphi'(e_K^{n+1})\ \frac{|K|}{\delta t} \rho_K^{n+1} (e_K^{n+1} - e_K^n)
+ \varphi'(e_K^n) \Bigl[ \sum_{\edge\in \edges(K)} F_{K,\edge}^n (e_\edge^n - e_K^n) \Bigr]
\\
=
\frac{|K|}{\delta t} \rho_K^{n+1} \bigl(\varphi(e_K^{n+1})-\varphi(e_K^n) \bigr)
+ \sum_{\edge\in \edges(K)} F_{K,\edge}^n \bigl( \varphi(e_\edge^n) - \varphi(e_K^n)\bigr)
\\+ |K|\,(R_1)_K^{n+1} + |K|\,(R_2)_K^{n+1},
\end{multline}
with:
\[\begin{array}{l}\displaystyle
|K|\,(R_1)_K^{n+1}= \frac{|K|}{\delta t} \rho_K^{n+1}\, \bigl(\varphi(e_K^n) - \varphi(e_K^{n+1}) - \varphi'(e_K^{n+1})( e_K^n-e_K^{n+1})\bigr),
\\[3ex] \displaystyle
|K|\,(R_2)_K^{n+1}= \sum_{\edge\in \edges(K)} F_{K,\edge}^n\, \bigl(\varphi(e_K^n) + \varphi'(e_K^n) (e_\edge^n - e_K^n)- \varphi(e_\edge^n)\bigr).
\end{array}\]
Switching now from the non-conservative formulation to the conservative one yields:
\begin{multline}
\varphi'(e_K^{n+1})\ \frac{|K|}{\delta t} \rho_K^{n+1} (e_K^{n+1} - e_K^n)
+ \varphi'(e_K^n) \Bigl[ \sum_{\edge\in \edges(K)} F_{K,\edge}^n (e_\edge^n - e_K^n) \Bigr]
\\
=
\frac{|K|}{\delta t} \bigl(\rho_K^{n+1}\varphi(e_K^{n+1})-\rho_K^n\, \varphi(e_K^n) \bigr)
+ \sum_{\edge\in \edges(K)} \!\! F_{K,\edge}^n\, \varphi(e_\edge^n)
+ |K|(R_1)_K^{n+1} + |K|(R_2)_K^{n+1}.
\end{multline}
The remainder $(R_1)_K^{n+1}$ may be written:
\begin{equation}\label{eq:R1-e-eup}
(R_1)_K^{n+1} = \frac 1 {2\delta t} \rho_K^{n+1}\, \varphi''(e_K^{n+1/2})\ (e_K^{n+1}-e_K^n)^2,
\end{equation}
where $e_K^{n+1/2} \in \li e_K^n,\ e_K^{n+1} \ri$.
Since $\varphi$ is supposed to be convex, this term is non-negative.
Let $e_{KL}^n$ be the real number defined by Equation \eqref{eq:conv_int} (and denoted in his latter relation by $x_{KL}$) with $x_K=e^n_K$ and $x_L=e^n_L$, and let us assume that, for $\edge \in \edgesint$, $\edge=K|L$ and for $0 \leq n \leq N-1$:
\begin{equation}\label{eq:e^exp}
(H_e^{\rm exp}) \qquad 
e_\edge^n \in \li e_K^n,\ e_{KL}^n \ri \mbox{ if } u_{K,\edge}^n \geq 0,\ e_\edge^n \in \li e_L^n,\ e_{KL}^n\ri \mbox{ otherwise.}
\end{equation}
Then, by a computation similar to the implicit case, the remainder $R_2$ enjoys the following properties.

\begin{lemma}
Let $M \geq 1$ and let us suppose that $\rho_K^n \leq M$, $e_K^n<M$, $1/ e_K^n \leq M$ and $|u_{K,\edge}| \leq M$, for $K \in \mesh$, $\edge \in \edges(K)$ and $0 \leq n \leq N$.
Let us define $|\varphi'|_\infty=\max(|\varphi'(1/M)|,\ |\varphi'(M)|)$.
Then, under the assumption $(H_e^{\rm exp})$, there exists $\delta\! R_2$ such that:
\[
R_2 \geq \delta\! R_2 \mbox{ and} \quad 
\norm{\delta\! R_2}_{L^1} \leq 3 M^2\ |\varphi'|_\infty\, \normxbv{e}\ h_\mesh.
\]
\end{lemma}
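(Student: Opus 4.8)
The plan is to reproduce, line by line, the argument that established Lemma~\ref{lmm:cons_e} in the implicit case, changing only the time exponents from $n+1$ to $n$, since here both the flux $F_{K,\edge}^n$ and the face value $e_\edge^n$ are taken at the old time level. First I would introduce, for each interior face $\edge=K|L$, the real number $e_{KL}^n$ defined by \eqref{eq:conv_int} with $x_K=e_K^n$, $x_L=e_L^n$, and set, in analogy with \eqref{eq:def_de},
\[
(\delta\varphi_e)_\edge^n=\varphi(e_K^n)-\varphi(e_\edge^n)+\varphi'(e_K^n)\,(e_{KL}^n-e_K^n)+\tfrac12\bigl[\varphi'(e_K^n)+\varphi'(e_L^n)\bigr](e_\edge^n-e_{KL}^n).
\]
Exactly as for $(\delta\varphi_\rho)$, the defining property \eqref{eq:conv_int} of $e_{KL}^n$ makes $(\delta\varphi_e)_\edge^n$ depend on $\edge$ only. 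A direct computation then gives the decomposition of the bracket appearing in $(R_2)_K^{n+1}$, namely $\varphi(e_K^n)+\varphi'(e_K^n)(e_\edge^n-e_K^n)-\varphi(e_\edge^n)=(\delta\varphi_e)_\edge^n+\tfrac12[\varphi'(e_K^n)-\varphi'(e_L^n)](e_\edge^n-e_{KL}^n)$. Defining $\delta\!R_2$ through $|K|\,(\delta\!R_2)_K^{n+1}=\sum_{\edge\in\edges(K)}F_{K,\edge}^n\,(\delta\varphi_e)_\edge^n$, one obtains $|K|\,(R_2)_K^{n+1}=|K|\,(\delta\!R_2)_K^{n+1}+\sum_{\edge\in\edges(K)}F_{K,\edge}^n\,\tfrac12[\varphi'(e_K^n)-\varphi'(e_L^n)](e_\edge^n-e_{KL}^n)$.

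The main obstacle is the second step: proving that the remaining sum is non-negative, so that $R_2\geq\delta\!R_2$. The key point, specific to the energy balance, is that since $\rho_\edge^n>0$ the flux $F_{K,\edge}^n=|\edge|\,\rho_\edge^n u_{K,\edge}^n$ has the same sign as $u_{K,\edge}^n$; hence assumption $(H_e^{\rm exp})$, stated in terms of the sign of $u_{K,\edge}^n$, controls the sign of the whole summand. Using that $\varphi'$ is increasing, a case analysis on the sign of $u_{K,\edge}^n$ then shows that, whenever $u_{K,\edge}^n\geq0$, the factors $[\varphi'(e_K^n)-\varphi'(e_L^n)]$ and $(e_\edge^n-e_{KL}^n)$ carry opposite (or matching) signs placing $e_\edge^n$ between $e_{KL}^n$ and the upwind value, so that their product is non-negative; the case $u_{K,\edge}^n<0$ is symmetric with $L$ in place of $K$. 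This sign bookkeeping is the delicate part and is where $(H_e^{\rm exp})$ is used; everything else is routine estimation.

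Finally I would bound $\delta\!R_2$. It is conservative, because $(\delta\varphi_e)_\edge^n$ depends only on $\edge$ and $F_{K,\edge}^n=-F_{L,\edge}^n$; its natural estimate is therefore in the weak (dual) norm, exactly as for $\delta\!R_m$ and $\delta\!R_e$, which accounts for the factor $h_\mesh$ in the right-hand side. Convexity of $\varphi$ together with $e_\edge^n,e_{KL}^n\in\li e_K^n,e_L^n\ri$ gives $|(\delta\varphi_e)_\edge^n|\leq 3\,|\varphi'|_\infty\,|e_K^n-e_L^n|$, just as in the proof of Lemma~\ref{lmm:cons_rho}. Pairing $\delta\!R_2$ with a test function $\psi$ and using its conservativity (discrete integration by parts) yields $\sum_n\delta t\sum_K|K|\,(\delta\!R_2)_K^{n+1}\psi_K^{n+1}=\sum_n\delta t\sum_{\edge=K|L}(\delta\varphi_e)_\edge^n\,(\psi_K^{n+1}-\psi_L^{n+1})\,F_{K,\edge}^n$; bounding $|F_{K,\edge}^n|\leq M^2|\edge|$ (from $\rho_\edge^n\leq M$ and $|u_{K,\edge}^n|\leq M$) and $|\psi_K^{n+1}-\psi_L^{n+1}|\leq h_\mesh\sup\|\gradi\psi\|$, then dividing by $\sup\|\gradi\psi\|$ and taking the supremum over $\psi$, produces precisely $3\,M^2\,|\varphi'|_\infty\,\normxbv{e}\,h_\mesh$, which is the announced bound.
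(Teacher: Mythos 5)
Your proposal is correct and takes essentially the same approach as the paper: the paper in fact omits this proof, saying only that it follows ``by a computation similar to the implicit case'', and your argument is exactly that computation --- the splitting of the bracket via $(\delta\varphi_e)_\edge^n$, the sign argument (where your observation that $F_{K,\edge}^n=|\edge|\,\rho_\edge^n u_{K,\edge}^n$ inherits the sign of $u_{K,\edge}^n$ from $\rho_\edge^n>0$ is precisely the point that lets $(H_e^{\rm exp})$, stated in terms of $u_{K,\edge}^n$, control the flux-weighted term), and the summation-by-parts bound of the conservative part. Note only that, as you implicitly do, the norm in the statement must be read as the discrete dual norm $\norm{\cdot}_{-1,1}$ of Lemmas \ref{lmm:cons_rho} and \ref{lmm:cons_e} (the ``$L^1$'' there is a notational slip of the paper), since a direct $L^1$ estimate of $\delta\!R_2$ would yield no factor $h_\mesh$; your test-function argument is the correct one.
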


\medskip
Let us now suppose that the discretization of the internal energy convection term is upwind.
In this case, we obtain for $(R_2)_K^{n+1}$:
\begin{equation}\label{eq:R2-e-eup}
|K|\,(R_2)_K^{n+1}=\frac 1 2 \sum_{\edge \in \edges(K)} (F_{K,\edge}^n)^-\, \varphi''(e_{K,\edge}^n) (e_K^n-e_L^n)^2,
\end{equation}
where $e_{K,\edge}^n \in \li e_K^n,\ e_L^n \ri$.
The remainder $R_K^{n+1}$ yields in the upwind case:
\[
|K|\, R_K^{n+1} = -\varphi''(\tilde e_K^{n+1/2})\ (e_K^{n+1}-e_K^n)\ \Bigl[ \sum_{\edge\in \edges(K)} (F_{K,\edge}^n)^- (e_L^n - e_K^n) \Bigr],
\]
where $e_K^{n+1/2} \in \li e_K^n,\ e_K^{n+1} \ri$.
So, thanks to the Young inequality:
\begin{multline*}
|K|\,|R_K^{n+1}| \leq \frac 1 2 \sum_{\edge\in \edges(K)} (F_{K,\edge}^n)^- \, \varphi''(e_{K,\edge}^n) (e_L^n - e_K^n)^2
\\+ \frac 1 2\ (e_K^{n+1}-e_K^n)^2 \sum_{\edge\in \edges(K)} (F_{K,\edge}^n)^- \frac{\varphi''(\tilde e_K^{n+1/2})^2}{\varphi''(e_{K,\edge}^n)}.
\end{multline*}
In view of the expressions \eqref{eq:R1-e-eup} and \eqref{eq:R2-e-eup} of $(R_1)_K^{n+1}$ and $(R_2)_K^{n+1}$ respectively, we obtain that $(R_1)_K^{n+1}+(R_2)_K^{n+1}+R_K^{n+1}\geq 0$ under the following CFL condition:
\begin{equation}\label{eq:cfl_e}
\delta t \leq \frac{\varphi''(e_K^{n+1/2})\ |K|\ \rho_K^{n+1}}
{\displaystyle \sum_{\edge\in \edges(K)} \frac{\varphi''(\tilde e_K^{n+1/2})^2}{\varphi''(e_{K,\edge}^n)}\ (F_{K,\edge}^n)^-}.
\end{equation}

Results of this section are gathered in the following proposition.

\begin{proposition}
Let $\varphi$ be a twice continuously convex function from $(0,+\infty)$ to $\xR$.
Then a discrete identity of the following form holds:
\begin{multline}\label{eq:e_id_exp} 
\varphi'(e_K^{n+1})\ \Bigl[\frac{|K|}{\delta t} (\rho_K^{n+1}e_K^{n+1}-\rho_K^n e_K^n) + \sum_{\edge\in \edges(K)} F_{K,\edge}^n e_\edge^n \Bigr]
\\
\leq \frac{|K|}{\delta t} \bigl(\rho_K^{n+1}\varphi(e_K^{n+1})-\rho_K^n\, \varphi(e_K^n) \bigr)
+ \sum_{\edge\in \edges(K)} F_{K,\edge}^n\, \varphi(e_\edge^n)
+ |K|\,(R_e)_K^n,
\hspace{3ex}\end{multline}
where the remainder term $R_e$ may be chosen to enjoy the following properties:
\begin{list}{-}{\itemsep=1ex \topsep=1ex \leftmargin=1.cm \labelwidth=0.3cm \labelsep=0.5cm \itemindent=0.cm}
\item {\em Case 1}:  the approximation of $e_\edge^n$ in \eqref{eq:e_id_exp} satisfies the assumption $(H_e^{\rm exp})$.
Let $M \geq 1$ and let us suppose that $\rho_K^n \leq M$, $e_K^n<M$, $1/ e_K^n \leq M$ and $|u_{K,\edge}| \leq M$, for $K \in \mesh$, $\edge \in \edges(K)$ and $0 \leq n \leq N$.
Let us define $|\varphi'|_\infty=\max(|\varphi'(1/M)|,\ |\varphi'(M)|)$, and let $|\varphi''|_\infty$ be the maximum value taken by $\varphi''$ on the interval $[1/M,\ M]$.
Then, the remainder term $R_e$ satisfies:
\[
\norm{R_e}_{L^1} \leq
3M^2\ |\varphi'|_\infty\, \normxbv{e}\ h_\mesh
+ M^2\ |\varphi''|_\infty\,\normtbv{e}\ \frac{\delta t}{\underline h_\mesh},
\]
where $\underline h_\mesh$ is defined by \eqref{eq:def_h_bar}.
\item {\em Case 2}: the approximation of $e_\edge^n$ in \eqref{eq:e_id_exp} is upwind.
Under the CFL condition \eqref{eq:cfl_e}, $(R_e)_K^{n+1}=0$.
\end{list}
\end{proposition}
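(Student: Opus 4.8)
The plan is to read off the result by \emph{assembling} the three successive decompositions carried out above into a single exact identity, and then to control the total remainder by invoking the three lemmas already established in this subsection. First I would chain the manipulations in the order in which they were introduced: starting from the first two terms of \eqref{eq:e_int_e} multiplied by $\varphi'(e_K^{n+1})$, switch from the conservative to the non-conservative form via \eqref{eq:cons_noncons}; shift the factor $\varphi'(e_K^{n+1})$ to $\varphi'(e_K^n)$ in the flux sum (which produces the remainder $R$); split off the two Taylor remainders $R_1$ and $R_2$; and finally switch back from the non-conservative to the conservative form using \eqref{eq:cons_noncons} with $z=\varphi(e)$. Composing these steps telescopes all the intermediate quantities and yields the exact identity
\begin{multline*}
\varphi'(e_K^{n+1})\ \Bigl[\frac{|K|}{\delta t} (\rho_K^{n+1}e_K^{n+1}-\rho_K^n e_K^n) + \sum_{\edge\in \edges(K)} F_{K,\edge}^n e_\edge^n \Bigr]
=\frac{|K|}{\delta t} \bigl(\rho_K^{n+1}\varphi(e_K^{n+1})-\rho_K^n\, \varphi(e_K^n) \bigr)
\\
+ \sum_{\edge\in \edges(K)} F_{K,\edge}^n\, \varphi(e_\edge^n)
+ |K|\,\bigl[(R_1)_K^{n+1}+(R_2)_K^{n+1}+R_K^{n+1}\bigr],
\end{multline*}
so that the only remaining task is to bound the total remainder $R_1+R_2+R$ in the two regimes.

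Next I would treat the two cases separately, using only the sign information already at hand. In both cases $R_1$ is non-negative by \eqref{eq:R1-e-eup} and the convexity of $\varphi$, and may be kept in the favorable direction. In Case~1, assumption $(H_e^{\rm exp})$ (relation \eqref{eq:e^exp}) lets me split $R_2$ into a non-negative part plus the term $\delta\!R_2$; I would therefore set $R_e=R+\delta\!R_2$, retain the non-negative contributions $R_1$ and $R_2-\delta\!R_2$ on the appropriate side, and bound $\norm{R_e}_{L^1}$ by the triangle inequality from the two preceding lemmas, namely $\norm{R}_{L^1}\le M^2\,|\varphi''|_\infty\,\normtbv{e}\,\delta t/\underline h_\mesh$ and $\norm{\delta\!R_2}_{L^1}\le 3M^2\,|\varphi'|_\infty\,\normxbv{e}\,h_\mesh$; summing gives exactly the stated estimate. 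In Case~2 (upwind), I would instead use the explicit upwind expressions \eqref{eq:R1-e-eup} and \eqref{eq:R2-e-eup}, together with the Young-inequality control of the cross term $R$ derived above, to show that $R_1+R_2+R\ge 0$ precisely under the CFL condition \eqref{eq:cfl_e}; the whole remainder then has a fixed sign and the statement holds with $R_e=0$.

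The only genuinely delicate point is the Case~2 balance, which is exactly the computation leading to \eqref{eq:cfl_e}: the cross term $R$ has no definite sign and, after the Young splitting, is quadratic in $e_K^{n+1}-e_K^n$, so it must be dominated by the non-negative time contribution $R_1\sim \rho_K^{n+1}\varphi''(e_K^{n+1/2})(e_K^{n+1}-e_K^n)^2/\delta t$ and by the non-negative upwind space contribution $R_2$. Matching the $\varphi''$ weights and the face fluxes $(F_{K,\edge}^n)^-$ in this domination is what forces the time-step restriction, and it is where all the real work sits; everything else is the routine bookkeeping of the telescoping identity. I expect no further obstacle, since the two $L^1$ bounds on $R$ and on $\delta\!R_2$ used in Case~1 are quoted directly from the preceding lemmas and combine by the triangle inequality.
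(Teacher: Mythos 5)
Your proposal is correct and follows essentially the same route as the paper: this proposition simply gathers the computations of Section 3.2, and you reassemble exactly that chain (the conservative/non-conservative switches and the splitting into $R$, $R_1$, $R_2$) into the exact identity, then conclude Case 1 by taking $R_e=R+\delta\!R_2$ and invoking the two $L^1$ lemmas, and Case 2 by the Young/CFL argument giving $R_1+R_2+R\ge 0$. One caveat, which you inherit from the paper rather than introduce: since both your argument and the paper's bound the total remainder from \emph{below} (by $R_e$ in Case 1, by $0$ in Case 2), what is actually proved is the relation with ``$\geq$'' in place of ``$\leq$'' in \eqref{eq:e_id_exp} (equivalently, with the remainder term moved to the left-hand side); this reversed direction is the correct one, being what is needed when the proposition is combined with the internal energy inequality \eqref{eq:e_int_e} and the fact that $\varphi_e'<0$ to obtain the entropy estimate of the final theorem.
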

%
% ----------------------------------------------------
%
\subsection{Entropy inequalities}

Entropy inequalities are obtained by applying the results of Sections \ref{subsec:rho_exp} and \ref{subsec:e_exp} with $\varphi=\varphi_\rho$ and $\varphi=\varphi_e$ respectively.

\medskip
Let us define by $\rho_{KL}^n$ (resp. $e_{KL}^n$) the real number defined by Equation \eqref{eq:conv_int} with $x_K=\rho^n_K$ (resp. $x_K=e^n_K$) and $x_L=\rho^n_L$ (resp. $x_L=e^n_L$) and $\varphi=\varphi_\rho$ (resp. $\varphi=\varphi_e$) and let us assume that, for $\edge \in \edgesint$, $\edge=K|L$ and for $0 \leq n \leq N-1$:
\begin{equation}\label{eq:H^exp}
\begin{array}{ll}
(H_\rho^{\rm exp})
\qquad &
\rho_\edge^n \in \li \rho_K^n,\ \rho_{KL}^n \ri \mbox{ if } u_{K,\edge}^n \geq 0,\ \rho_\edge^n \in \li \rho_L^n,\ \rho_{KL}^n\ri \mbox{ otherwise,}
\\[2ex]
(H_e^{\rm exp})
\qquad &
e_\edge^n \in \li e_K^n,\ e_{KL}^n \ri \mbox{ if } u_{K,\edge}^n \geq 0,\ e_\edge^n \in \li e_L^n,\ e_{KL}^n\ri \mbox{ otherwise.}
\end{array}
\end{equation}

\begin{theorem}[Discrete entropy inequalities, explicit schemes]
Let $\rho$ and $e$ satisfy the relations of the scheme \eqref{eq:expl}.
Let $M \geq 1$ and let us suppose that $\rho_K^n \leq M$, $1/ \rho_K^n \leq M$, $e_K^n \leq M$, $1/e_K^n \leq M$ and $|u_{K,\edge}| \leq M$, for $K \in \mesh$, $\edge \in \edges(K)$ and $0 \leq n \leq N$.
Let $|\varphi'_\rho|_\infty=\max(|\varphi'_\rho(1/M)|,\ |\varphi'_\rho(M)|)$, $|\varphi'_e|_\infty=\max(|\varphi'_e(1/M)|,\ |\varphi'_e(M)|)$ and
let us denote by $|\varphi''_\rho|_\infty$ and $|\varphi''_e|_\infty$ the maximum value taken by $\varphi''_\rho$ and $\varphi''_e$ respectively on the interval $[1/M,\ M]$.
Let $\eta$ be defined as in Theorem \ref{thrm:impl_upw}.
Then any solution of the scheme \eqref{eq:expl} satisfies, for any $K\in\mesh$ and $0 \leq n \leq N-1$:
\[
\frac{|K|}{\delta t} (\eta_K^{n+1}-\eta_K^n)
+ \sum_{\edge\in \edges(K)} |\edge|\ \eta_\edge^n u_{K,\edge}^n + |K|\ (R_\eta)_K^n \leq 0,
\]
where the remainder term $R_\eta$ enjoys the following properties, depending on the discretization of the convection term:
\begin{list}{-}{\itemsep=1ex \topsep=1ex \leftmargin=1.cm \labelwidth=0.3cm \labelsep=0.5cm \itemindent=0.cm}
\item {\em Case 1}: the discretization of the convection term in \eqref{eq:mass_e} and \eqref{eq:e_int_e} satisfies the assumption $(H_\rho^{\rm exp})$ and $(H_e^{\rm exp})$ respectively.
We have $R_\eta=R_{\eta,1}+R_{\eta,2}$ with:
\begin{align*}
& \hspace{2cm}\norm{R_{\eta,1}}_{-1,1,\star}  \leq 3M\, \Bigl(|\varphi'_\rho|_\infty\,\normxbv{\rho}+M\ |\varphi'_e|_\infty\,\normxbv{e}\Bigr)\ h_\mesh,
\\
&\hspace{2cm}\norm{R_{\eta,2}}_{L^1} \leq M^2 \ \Bigl(|\varphi''_\rho|_\infty\,\normtbv{\rho}+|\varphi''_e|_\infty\,\normtbv{e}\Bigr)
\ \frac{\delta t}{\underline h_\mesh},
\end{align*}
where $\underline h_\mesh$ is defined by \eqref{eq:def_h_bar}.
\item {\em Case 2}: the discretization of the convection term in \eqref{eq:mass_e} and \eqref{eq:e_int_e} is upwind.
Under the CFL conditions \eqref{eq:cfl_rho} and \eqref{eq:cfl_e}, we also have (with a different expression for $R_\eta$):
\begin{equation}\label{eq:R_ent_e}
\norm{R_\eta}_{L^1} \leq C\ C_\mesh\ M^{(2p-1)/p}\ |\varphi''|_\infty\ \normtbv{\rho}^{1/p}\ \norm{\bfu}_{L^{q}(0,T;W^{1,q}_\mesh)}\ \delta t^{1/p}.
\end{equation}
where $p \geq 1$, $q\geq 1$ and $\dfrac 1 p + \dfrac 1 {q}=1$, $C_\mesh$ is defined by \eqref{eq:def_Cm} and $C$ only depends on the number of faces of the mesh cells.
\end{list}
\end{theorem}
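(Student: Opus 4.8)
The plan is to build the discrete entropy inequality by summing the two renormalized balances of Sections \ref{subsec:rho_exp} and \ref{subsec:e_exp}, mimicking the continuous computation in which \eqref{eq:entropy} is obtained by adding \eqref{eq:ent_m} and \eqref{eq:ent_e}. Both $\varphi_\rho$ and $\varphi_e$ from \eqref{eq:vphis} are convex, so the propositions of those sections apply with $\varphi=\varphi_\rho$ and $\varphi=\varphi_e$ respectively; the one sign subtlety is that $\varphi_e'<0$, which I use to orient the internal energy inequality correctly.

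First I apply the proposition of Section \ref{subsec:rho_exp} with $\varphi=\varphi_\rho$ to \eqref{eq:mass_e}, which directly yields
\[
\frac{|K|}{\delta t}\bigl[\varphi_\rho(\rho_K^{n+1})-\varphi_\rho(\rho_K^n)\bigr]
+\sum_{\edge\in\edges(K)}|\edge|\,\varphi_\rho(\rho_\edge^n)\,u_{K,\edge}^n
+\bigl(\rho_K^n\varphi_\rho'(\rho_K^n)-\varphi_\rho(\rho_K^n)\bigr)\!\!\sum_{\edge\in\edges(K)}\!\!|\edge|\,u_{K,\edge}^n
+|K|\,(R_\rho)_K^{n+1}\leq 0,
\]
with $R_\rho$ controlled as stated there (the $R_{\rho,1}$/$R_{\rho,2}$ split under $(H_\rho^{\rm exp})$, or the velocity-gradient bound under \eqref{eq:cfl_rho} in the upwind case). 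Next, since $\varphi_e'<0$, multiplying \eqref{eq:e_int_e} by $\varphi_e'(e_K^{n+1})$ reverses it into a $\leq 0$ inequality; rewriting its first two terms in conservative form by the manipulation of Section \ref{subsec:e_exp} and discarding the non-negative renormalization remainders (the term \eqref{eq:R1-e-eup}, non-negative by convexity, together with the non-negative part of \eqref{eq:R2-e-eup}, or the full remainder once the Young/CFL splitting \eqref{eq:cfl_e} is used in the upwind case), I obtain
\[
\frac{|K|}{\delta t}\bigl[\rho_K^{n+1}\varphi_e(e_K^{n+1})-\rho_K^n\varphi_e(e_K^n)\bigr]
+\sum_{\edge\in\edges(K)}F_{K,\edge}^n\,\varphi_e(e_\edge^n)
+\varphi_e'(e_K^{n+1})\,p_K^n\!\!\sum_{\edge\in\edges(K)}\!\!|\edge|\,u_{K,\edge}^n
+|K|\,(R_e)_K^n\leq 0,
\]
with $R_e$ bounded as in that section (and $R_e=0$ under \eqref{eq:cfl_e}).

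Summing the two inequalities, the time-derivative terms collect into $\frac{|K|}{\delta t}(\eta_K^{n+1}-\eta_K^n)$ with $\eta$ as in Theorem \ref{thrm:impl_upw}, and, using $F_{K,\edge}^n=|\edge|\,\rho_\edge^n u_{K,\edge}^n$, the two face sums collect into $\sum_{\edge}|\edge|\,\eta_\edge^n u_{K,\edge}^n$. It then remains to treat the two contributions proportional to $\sum_\edge|\edge|u_{K,\edge}^n$, via the equation of state \eqref{eq:etat_e} and the pointwise relation \eqref{prop-entropie}. The point that is new with respect to the implicit case is a time-level mismatch: the mass contribution carries $\rho_K^n\varphi_\rho'(\rho_K^n)-\varphi_\rho(\rho_K^n)=\rho_K^n$ at level $n$, whereas the energy one carries $\varphi_e'(e_K^{n+1})\,p_K^n=-\rho_K^n e_K^n/e_K^{n+1}$, so \eqref{prop-entropie} cancels them only up to the residual $\rho_K^n\,(e_K^{n+1}-e_K^n)/e_K^{n+1}\sum_\edge|\edge|u_{K,\edge}^n$. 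Writing $\sum_\edge|\edge|u_{K,\edge}^n=\sum_\edge|\edge|(\bfu_\edge^n-\bfu_K^n)\cdot\bfn_{K,\edge}$ (legitimate since $\sum_\edge|\edge|\bfn_{K,\edge}=0$), this residual has exactly the structure of the remainders already estimated — a time increment times a discrete divergence — so it is absorbed into the $\normtbv{e}\,\delta t/\underline h_\mesh$ remainder in Case 1 and into the velocity-gradient remainder in Case 2. Setting $R_\eta=R_\rho+R_e$ plus this residual and invoking the $\norm{\cdot}_{-1,1,\star}$ and $\norm{\cdot}_{L^1}$ estimates of the two propositions yields the announced bounds.

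The main obstacle is Case 2. There the renormalization remainders (the $R_{01}$-type term $\varphi''(\cdot)(\rho_K^{n+1}-\rho_K^n)\rho_K^n\sum_\edge|\edge|u_{K,\edge}^n$, its internal-energy analogue, and the time-mismatch residual above) are not sign-definite and cannot be dropped pointwise. The strategy is to render the dangerous part non-negative through a Young splitting under the CFL conditions \eqref{eq:cfl_rho}--\eqref{eq:cfl_e}, and to estimate the surviving piece in $L^1$ by a H\"older inequality against the discrete velocity-gradient norm $\norm{\bfu}_{L^q(0,T;W^{1,q}_\mesh)}$; this is exactly what produces the factor $\delta t^{1/p}$ and forces the time-to-space step ratio to be small, and it is where a control of the velocity (the stabilization alluded to in the abstract) is required. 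Keeping track of the signs in the pressure cancellation across the two time levels, so that precisely the terms handled by the CFL/Young argument are the ones that remain, is the other delicate bookkeeping step.
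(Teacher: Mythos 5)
Your proof takes the same route as the paper: for this theorem the paper gives no detailed argument at all, only the one-line remark that the propositions of Sections \ref{subsec:rho_exp} and \ref{subsec:e_exp} are to be applied with $\varphi=\varphi_\rho$ and $\varphi=\varphi_e$ and the results summed, which is exactly your plan. The substantive difference is that you identify a residual which the paper passes over in silence. Since the renormalization of the internal energy balance must use the multiplier $\varphi_e'(e_K^{n+1})$ (evaluating $\varphi_e'$ at the new time level is what makes the Taylor remainder of the time-increment term non-negative; with $\varphi_e'(e_K^n)$ it would have the wrong sign and could not be discarded), while the pressure work term in \eqref{eq:e_int_e} carries $p_K^n$, the cancellation \eqref{prop-entropie} against the mass-side term $\rho_K^n\varphi_\rho'(\rho_K^n)-\varphi_\rho(\rho_K^n)=\rho_K^n$ is only exact up to the term $\rho_K^n\,(e_K^{n+1}-e_K^n)/e_K^{n+1}\,\sum_{\edge}|\edge|\,u_{K,\edge}^n$ that you exhibit; this mismatch is unavoidable in this framework, and your handling of it is the right one. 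In Case 1 it is an $L^1$ term of size $M^3\,\normtbv{e}\,\delta t/\underline h_\mesh$, i.e.\ of exactly the same form as $R_{\eta,2}$ (only the constant differs from the one displayed in the theorem). In Case 2 it cannot be made non-negative by the CFL conditions \eqref{eq:cfl_rho} and \eqref{eq:cfl_e}, which control only the incoming fluxes, whereas this term involves the full discrete divergence; writing $\sum_{\edge}|\edge|\,u_{K,\edge}^n=\sum_{\edge}|\edge|\,(\bfu_\edge^n-\bfu_K^n)\cdot\bfn_{K,\edge}$ and applying the H\"older estimate of the lemma bounding $R_{01}$ in Section \ref{subsec:rho_exp} is indeed the correct treatment. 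Note, however, what your (correct) argument actually delivers in Case 2: a bound containing an additional contribution of the form $C\,C_\mesh\,M^{(2p-1)/p}\,|\varphi_e''|_\infty\,\normtbv{e}^{1/p}\,\norm{\bfu}_{L^{q}(0,T;W^{1,q}_\mesh)}\,\delta t^{1/p}$, whereas the printed bound \eqref{eq:R_ent_e} involves only $\normtbv{\rho}^{1/p}$ and thus does not account for the pressure residual. So your proof does not merely reproduce the statement; it repairs the step the paper glosses over and shows that the Case 2 estimate should be augmented by a term of the same form in the time-BV norm of $e$.
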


\medskip
This result deserves the following comments:
\begin{list}{-}{\itemsep=1ex \topsep=1ex \leftmargin=1.cm \labelwidth=0.3cm \labelsep=0.5cm \itemindent=0.cm}
\item First, in the explicit case, we are not able to prove an entropy inequality, neither local nor global.
\item The convergence to zero with the space and time step of the remainders is obtained, supposing a control of discrete solutions in $L^\infty$ and discrete BV norms, in two cases: first when the ratio $\delta t/\underline h_\mesh$ tends to zero, second when the $L^{q}(0,T;W^{1,q}_\mesh)$ norm of the velocity does not blow-up too quickly with the space step.
To this respect, let us suppose that we implement a stabilization term in the momentum balance equation appearing in certain turbulence models  \cite{berselli2006math,sagaut2006large}, and reading (in a pseudo-continuous setting, for short and to avoid the technicalities associated to the space discretization, which may be, for instance, colocated or staggered), for $1 \leq i \leq d$:
\begin{equation}\label{eq:mom}
\partial_t(\rho u_i) + \dive(\rho u_i \bfu) + \partial_i p - h_\mesh^\alpha \Delta_{q} u_i =0,
\end{equation}
where $\Delta_q u_i$ is such that
\[
\norm{u_i}_{W^{1,q}_\mesh}^{q} \leq C \int_\Omega -\Delta_{q} u_i\ u_i \dx,
\]
where $C$ is independent of $h_\mesh$.
This kind of viscosity term may be found in turbulence models
Multiplying \eqref{eq:mom} by $u_i$ and integrating with respect to space and time yields:
\begin{equation}\label{eq:mom_int}
\int_0^T \int_\Omega -\Delta_{q} u_i\ u_i \dx \dt =
- \int_0^T \int_\Omega \bigl(\partial_t(\rho u_i) + \dive(\rho u_i \bfu) + \partial_i p\bigr)\ u_i \dx \dt,
\end{equation}
In this relation, the right-hand side may be controlled under $L^\infty$ and BV stability assumptions (remember that, at the discrete level, the BV and $W^{1,1}$ norms are the same), and we obtain an estimate of the $\norm{\bfu}_{L^{q}(0,T;W^{1,q}_\mesh)}$ which may be used in \eqref{eq:R_ent_e}.
A standard first order diffusion-like stabilizing term corresponds to $q=2$ and $\alpha=1$, so yields and estimate of the $L^2(0,T;H^1_\mesh)$ norm of the velocity as $1/h_\mesh^{1/2}$ which is just counterbalanced by the term $\delta t^{1/2}$ ($p=2$), supposing that the CFL number is constant; such a stabilization is thus not sufficient to ensure that the remainder term tends to zero.
What is needed is in fact:
\[
\alpha < q-1.
\]
To avoid an over-diffusion in the momentum balance, this inequality suggests to implement a non-linear stabilization with $q>2$ which, in turn, will allow $\alpha >1$.
With such a trick, we will be able to obtain the desired "Lax-convergence" result: the limit of a convergent sequence of solutions, bounded in $L^\infty$ and BV norms, and obtained with space and time steps tending to zero, satisfies a weak entropy inequality.
\item We introduced in \cite{pia-13-for} a limitation process for a MUSCL-like algorithm for the transport equation, which consists in deriving an admissible interval for the approximation of the unknowns at the mesh faces, in convection terms, thanks to extrema preservation arguments.
This limitation process has been extended to Euler equations in \cite{gas-17-mus}.
The conditions $(H_\rho^{\rm exp})$ and $(H_e^{\rm exp})$ may easily be incorporated in this limitation: indeed, they define also an admissible interval, which is not disjoint from the initial one, since the upwind value belongs to both.
A similar idea (namely restricting the choice for the face approximation in order to obtain an entropy inequality) may be found in \cite{ber-14-ent}.
\end{list}
%
%--------------------------------------------------------------------------------------------------------------------------
%
\bibliographystyle{abbrv}
\bibliography{./entropy}\end{document}